\newtheorem{theorem}{Theorem}[section]
\newtheorem{example}[theorem]{Example}
\newtheorem{lemma}[theorem]{Lemma}
\newtheorem{remark}[theorem]{Remark}
\newtheorem{cor}[theorem]{Corollary}
\DeclareMathAlphabet\mathbb{U}{msb}{m}{n}
\def\xyma{\xymatrix@M.7em}
\begin{document}
\title{Leibniz rule on higher pages of unstable spectral sequences}
\author{Sergei O. Ivanov}
\address{Chebyshev Laboratory, St. Petersburg State University, 14th Line, 29b,
Saint Petersburg, 199178 Russia} \email{ivanov.s.o.1986@gmail.com}

\author{Roman Mikhailov}
\address{Chebyshev Laboratory, St. Petersburg State University, 14th Line, 29b,
Saint Petersburg, 199178 Russia and St. Petersburg Department of
Steklov Mathematical Institute} \email{rmikhailov@mail.ru}
\author{Jie Wu }
\address{Department of Mathematics, National University of Singapore, 10 Lower Kent Ridge Road, Singapore 119076} \email{matwuj@nus.edu.sg}
\urladdr{www.math.nus.edu.sg/\~{}matwujie}

\thanks{The research is supported by Russian Scientific Foundation, grant N 14-21-00035.}

\begin{abstract}
A natural composition $\odot$ on all pages of the lower central
series spectral sequence for spheres is defined. Moreover, it is
defined for $p$-lower central series spectral sequence of a
simplicial group. It is proved that $r$th differential satisfies a
``Leibiz rule with suspension'': $d^r(a\odot \sigma b)=\pm
d^ra\odot b+a\odot d^r\sigma b,$ where $\sigma$ is the suspension
homomorphism.
\end{abstract}

 \maketitle

\section{Introduction}

It often happens that, different spectral sequences, which play a
fundamental role in algebraic topology, have additional structures
like smash and composition pairing, Whitehead products etc. In
particular, the main homotopy spectral sequences such as
Bousfield-Kan and Adams spectral sequences have a composition
pairing \cite{Bousfield-Kan2}. In the stable range the composition
for the homotopy spectral sequences are introduced naturally. For
the unstable spectral sequences the situation is different.

In a practice, any additional structure on a spectral sequence gives a lot of information on its differentials.
In this paper, the authors discuss mainly the spectral sequences arising from filtrations and consider general
properties needed for the definition of composition in the unstable case.

The main motivating example is the following.
For a simplicial group $G$, and an odd prime $p$, let $E(G,
\mathbb F_p)$ be the $p$-lower central series spectral sequence
(see \cite{Rector}, \cite{Bousfield_Curtis}, \cite{6_authors}). For a simplicial set $X$ we set $E(X,\mathbb F_p)=E(F[X],\mathbb F_p),$ where $F[X]$ is Milnor's construction. It is well-known that $E(X,\mathbb F_p) \Rightarrow \pi_*(\Sigma X,p),$ for 'good' spaces $X$, where $\pi_*(\Sigma X,p)$ is the quotient of $\pi_*(\Sigma X)$ by prime to $p$ torsion \cite{Rector} (we use  $F[X]=G\Sigma X,$ where $G$ is the Kan's loop functor). Moreover, $E(S^{2n},\mathbb F_p)$ is described on the language of the lambda algebra \cite{6_authors}. Denote $E(S^*,\mathbb F_p)=\bigoplus_{i\geq 2} E(S^i,
\mathbb F_p).$ We show that (Theorem
\ref{simplicial_theorem}) for $r\geq 1,$ there exists a
well-defined map
$$\odot: E^r(G,\mathbb F_p)\times E^r(S^*,\mathbb
F_p)\longrightarrow E^r(G,\mathbb F_p),$$ such that for elements
$a\in E^r_{k,l}(G,\mathbb F_p)$ and $b\in E^r_{s,t}(S^*,\mathbb
F_p)$ the following Leibniz-like rule holds
$$
d^r(a\odot \sigma b)=(-1)^{l+t} d^r a \odot b+a\odot d^r \sigma b,
$$
where $\sigma$ is the suspension homomorphism. Moreover, this composition coincides with the multiplication on the lambda algebra.

This composition operation is a consequence of the more general result,
\hbox{Theorem \ref{Theorem_E(S^*,M)}}, which is formulated in a general
setting, for a filtered monad over Milnor's construction on a pointed simplicial set and the Kleisli composition. Such a
general approach allows to introduce a natural composition on
different unstable spectral sequences defined via filtrations of
simplicial groups.

\section{Kleisli composition and suspension homomorphism}

\subsection{Kleisli composition}

Let $(\mathcal M,{\sf m},{\sf e})$ be a monad on the category of
pointed sets $\mathcal M:{\sf Sets}_*\to {\sf Sets}_*,$ where
\hbox{${\sf m}:\mathcal M^2 \to \mathcal M$} is the multiplication
morphism and ${\sf e}:{\sf Id}\to \mathcal M$ is the identity
morphism. Let $A$ be a $\mathcal M$-algebra in ${\sf Sets}_*$ with
the structure morphism ${\sf m}^A:\mathcal MA\to A$ and $X,Y$ be a
two pointed sets. Then we define the {\bf Kleisli composition}
(see VI.5 of \cite{Mac_Lane}) as follows
$${\sf Sets}_*(X,A)\times {\sf Sets}_*(Y, \mathcal M X) \longrightarrow {\sf Sets}_*(Y,A)$$
$$(f,g)\longmapsto {\sf m}^A \circ \mathcal M f\circ g  =:f\odot g.$$
This map is natural by $Y$ and $A.$ In other words, if $h:Y'\to Y$
is a point-preserving map and $\alpha:A\to A'$ is a homomorphism
of $\mathcal M$-algebras, then
\begin{equation}\label{eq_M-comp1}
(f\odot g) \circ h=f\odot (g\circ h)
\end{equation}
and
\begin{equation}\label{eq_M-comp2}
\alpha\circ (f\odot g)=(\alpha\circ f)\odot g.
\end{equation}
The first equality is obvious. The second holds because
$\alpha\circ (f\odot g)=\alpha\circ {\sf m}^A \circ \mathcal M
f\circ g={\sf m}^B\circ \mathcal M\alpha \circ \mathcal Mf \circ
g={\sf m}^B\circ \mathcal M(\alpha\circ f) \circ g=(\alpha\circ f)
\odot g.$ Hence we can write compositions $\alpha\circ f\odot
g\circ h$ without brackets.

Extend the monad $(\mathcal M,{\sf m},{\sf e})$ on the category of
pointed simplicial sets $\mathcal S_*:$
$$\mathcal M:\mathcal S_*\longrightarrow \mathcal S_*,$$
and extend the Kleisli composition
$$\odot: \mathcal S_*(X,A)\times \mathcal S_*(Y,\mathcal MX)\longrightarrow \mathcal S_*(Y,A),$$ by the same formula, where $X,Y\in \mathcal S_*$ and $A$ is a simplicial $\mathcal M$-algebra. It is easy to see that the properties \eqref{eq_M-comp1} and \eqref{eq_M-comp2} still holds.

Consider the case of $Y=\Delta[m]^+=\Delta[m]\sqcup *$. Since $\mathcal S_*(\Delta[m]^+,A)=A_m$ and $\mathcal
S_*(\Delta[m]^+,\mathcal MX)=\mathcal MX_m$ we get a map
\begin{equation}\label{eq_M_action}
\odot : \mathcal S_*(X,A)\times \mathcal MX_m\longrightarrow A_m.
\end{equation}
\begin{lemma} Let $A$ be a simplicial $\mathcal M$-algebra and $f\in\mathcal S_*(X,A).$
Then $$f\odot -:\mathcal MX\to A$$ is the unique morphism  of simplicial $\mathcal M$-algebras such that $(f\odot -)\circ {\sf e}_X=f$.
In particular, $
d_i(f\odot g)=f\odot d_ig$ and $s_i(f\odot g)=f\odot s_ig$ for all
$i$ and $g\in \mathcal MX.$
\end{lemma}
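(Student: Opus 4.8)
The plan is to recognize this lemma as nothing more than the universal property of the free $\mathcal M$-algebra $\mathcal MX$, transported into the simplicial category. The first step is to make the map $f\odot-$ explicit. Taking $g={\sf id}_{\mathcal MX}$ (the case $Y=\mathcal MX$) in the Kleisli formula gives $f\odot{\sf id}_{\mathcal MX}={\sf m}^A\circ\mathcal Mf$, and naturality \eqref{eq_M-comp1} reduces an arbitrary simplex $g\in\mathcal MX_m$ to this case, so that as a simplicial map $f\odot-={\sf m}^A\circ\mathcal Mf:\mathcal MX\to A$. Here $\mathcal MX$ is understood with its free algebra structure ${\sf m}_X:\mathcal M^2X\to\mathcal MX$. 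All three assertions then become diagrammatic statements about this single composite, and since $\mathcal M$, ${\sf m}$, ${\sf e}$ are extended to $\mathcal S_*$ degreewise and naturally, it suffices to chase the monad and algebra axioms.

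Next I would verify that ${\sf m}^A\circ\mathcal Mf$ is a morphism of $\mathcal M$-algebras, i.e.\ that it intertwines ${\sf m}_X$ with ${\sf m}^A$. Beginning from $(f\odot-)\circ{\sf m}_X={\sf m}^A\circ\mathcal Mf\circ{\sf m}_X$, naturality of ${\sf m}$ rewrites $\mathcal Mf\circ{\sf m}_X={\sf m}_A\circ\mathcal M^2f$, and the associativity axiom ${\sf m}^A\circ{\sf m}_A={\sf m}^A\circ\mathcal M{\sf m}^A$ of the algebra $A$ then turns the expression into ${\sf m}^A\circ\mathcal M({\sf m}^A\circ\mathcal Mf)={\sf m}^A\circ\mathcal M(f\odot-)$, as required. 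The unit condition $(f\odot-)\circ{\sf e}_X=f$ follows from naturality of ${\sf e}$, giving $\mathcal Mf\circ{\sf e}_X={\sf e}_A\circ f$, combined with the algebra unit axiom ${\sf m}^A\circ{\sf e}_A={\rm id}_A$.

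For uniqueness, suppose $\varphi:\mathcal MX\to A$ is any morphism of simplicial $\mathcal M$-algebras with $\varphi\circ{\sf e}_X=f$. Then ${\sf m}^A\circ\mathcal Mf={\sf m}^A\circ\mathcal M(\varphi\circ{\sf e}_X)={\sf m}^A\circ\mathcal M\varphi\circ\mathcal M{\sf e}_X=\varphi\circ{\sf m}_X\circ\mathcal M{\sf e}_X$, where the last equality is the algebra-morphism property of $\varphi$; the triangle identity ${\sf m}_X\circ\mathcal M{\sf e}_X={\rm id}_{\mathcal MX}$ collapses this to $\varphi$, whence $\varphi=f\odot-$. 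The \emph{in particular} clause is then immediate: a morphism of simplicial $\mathcal M$-algebras is, in particular, a map of simplicial sets, hence commutes with every face and degeneracy operator, which reads precisely $d_i(f\odot g)=f\odot d_ig$ and $s_i(f\odot g)=f\odot s_ig$.

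I do not expect a genuine obstacle here, since the content is exactly the free-algebra (Eilenberg--Moore) adjunction and the whole argument is a diagram chase through the monad and algebra axioms. The only point demanding care is bookkeeping: distinguishing the free structure ${\sf m}_X$ on $\mathcal MX$ from the given structure ${\sf m}^A$ on $A$, and making sure that each invocation of associativity, the algebra unit law, the triangle identity, or naturality of ${\sf m}$ and ${\sf e}$ is the correct one. That these levelwise categorical identities assemble into the stated simplicial equalities is automatic because the monad was extended to $\mathcal S_*$ degreewise, so no separate simplicial verification is needed.
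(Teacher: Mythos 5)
Your proof is correct and takes essentially the same route as the paper's: both rest on the identification $f\odot-={\sf m}^A\circ\mathcal Mf$, verify the algebra-morphism property by naturality of ${\sf m}$ together with associativity of the $A$-structure, and obtain the unit condition from naturality of ${\sf e}$ plus the algebra unit axiom. The only (harmless) divergences are that where the paper checks simplicialness against coface/codegeneracy maps and quotes freeness of $\mathcal MX$ for uniqueness, you factor each simplex through ${\sf id}_{\mathcal MX}$ via \eqref{eq_M-comp1} and prove uniqueness directly from the monad unit law ${\sf m}_X\circ\mathcal M{\sf e}_X={\rm id}_{\mathcal MX}$.
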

\begin{proof}
First we prove that $f\odot -$ is a simplicial map. Indeed,
consider the coface map $d^i:\Delta[n]\to \Delta[n+1].$  Applying
\eqref{eq_M-comp1}, we get $$d_i(f\odot g)=(f\odot g) \circ
(d^{i})^+=f\odot (g\circ (d^i)^+)=f\odot d_ig.$$ Similarly we obtain
$s_i(f\odot g)=f\odot s_ig.$

Prove that $f\odot -$ is a morphism of $\mathcal M$-algebras. We
have to prove the equality $${\sf m}^A\circ \mathcal M(f\odot -)=
(f\odot -) \circ {\sf m}_{X}.$$ Note that $f\odot -={\sf m}^A\circ
\mathcal Mf.$ Then \begin{multline*} {\sf m}^A\circ \mathcal
M(f\odot -)= {\sf m}^A\circ \mathcal M{\sf m}^A\circ \mathcal
M^2f=\\ {\sf m}^A\circ {\sf m}_{A}\circ \mathcal M^2f={\sf
m}^A\circ \mathcal Mf\circ {\sf m}_{X}=(f\odot -) \circ {\sf
m}_{X}.\end{multline*} Thus $f\odot -$ is a morphism of $\mathcal
M$-algebras.

Since $\mathcal MX$ is a free $\mathcal M$-algebra generated by
$X,$ any morphism of $\mathcal M$-algebras $\mathcal MX\to A$ is
uniquely defined by the composition $X\overset{{\sf e}_X}\to
\mathcal MX\to A.$  Since $$(f\odot -)\circ {\sf e}_X=f\odot {\sf
e}_X={\sf m}^A\circ \mathcal Mf\circ {\sf e}_X={\sf m}^A\circ {\sf
e}_A\circ f=f,$$ we obtain that $f\odot -$ is the unique morphism
of simplicial algebras such that $(f\odot -)\circ {\sf e}_X=f.$
\end{proof} Denote by ${\bf i}_n$ the non-degenerate $n$-simplex in $\Delta[n]$ and its image in $S^n.$
If we take $X=\Delta[n]^+$ and $X=S^n,$ and use that $\mathcal
S_*(\Delta[n]^+,X)\cong X_n$ and $\mathcal S_*(S^n,X)\cong
Z_nX=\{x\in X_n\mid d_ix=* \text{ for } 0\leq i\leq n\}$  we get
the following corollaries.
\begin{cor}
Let $A$ be a simplicial $\mathcal M$-algebra and $a\in A_n.$ Then
$$a\odot -:\mathcal M(\Delta[n]^+)\to A$$ is the unique morphism  of
simplicial $\mathcal M$-algebras  that sends ${\sf e}{\bf i}_n$ to
$a.$ In particular, $ d_i(a\odot x)=a\odot d_ix$ and $s_i(a\odot
x)=a\odot s_ix$ for all $i$ and $x\in \mathcal M(\Delta[n]).$
\end{cor}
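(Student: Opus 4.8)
The plan is to deduce the statement directly from the preceding Lemma by specializing to $X=\Delta[n]^+$ and unwinding the Yoneda-type identification $\mathcal S_*(\Delta[n]^+,A)\cong A_n$ already invoked in the paragraph before the corollary. First I would make this identification explicit: it sends a pointed simplicial map $f\colon \Delta[n]^+\to A$ to its value $f({\bf i}_n)\in A_n$ on the non-degenerate top simplex, the disjoint basepoint being forced to the basepoint of $A$. Conversely, to the given element $a\in A_n$ it associates a unique $f_a\in\mathcal S_*(\Delta[n]^+,A)$ with $f_a({\bf i}_n)=a$, and by definition of the induced composition we have $a\odot - = f_a\odot -$.

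Next, applying the Lemma with $X=\Delta[n]^+$ and $f=f_a$ gives that $f_a\odot -\colon \mathcal M(\Delta[n]^+)\to A$ is the unique morphism of simplicial $\mathcal M$-algebras with $(f_a\odot -)\circ {\sf e}_{\Delta[n]^+}=f_a$. The remaining point is to translate this normalization condition into the asserted statement about ${\sf e}{\bf i}_n$. Evaluating on the top simplex and using that ${\sf e}{\bf i}_n={\sf e}_{\Delta[n]^+}({\bf i}_n)$, I obtain $(f_a\odot -)({\sf e}{\bf i}_n)=\big((f_a\odot -)\circ {\sf e}_{\Delta[n]^+}\big)({\bf i}_n)=f_a({\bf i}_n)=a$. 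Conversely, a morphism of $\mathcal M$-algebras $\phi\colon \mathcal M(\Delta[n]^+)\to A$ satisfies $\phi({\sf e}{\bf i}_n)=a$ if and only if the composite $\phi\circ {\sf e}_{\Delta[n]^+}$ takes value $a$ on ${\bf i}_n$, which under the identification above is exactly the condition $\phi\circ {\sf e}_{\Delta[n]^+}=f_a$. Hence the uniqueness granted by the Lemma yields precisely the uniqueness asserted in the corollary.

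Finally, the ``in particular'' identities $d_i(a\odot x)=a\odot d_ix$ and $s_i(a\odot x)=a\odot s_ix$ for $x\in\mathcal M(\Delta[n])$ are immediate from the corresponding face- and degeneracy-compatibilities in the Lemma, taken with $f=f_a$ and $g=x$. The only step that requires a little care is the bijection $\mathcal S_*(\Delta[n]^+,A)\cong A_n$ together with the observation that matching the value on ${\bf i}_n$ is equivalent to matching the composite with ${\sf e}_{\Delta[n]^+}$; once this dictionary is fixed, the corollary is a purely formal consequence of the Lemma, so I do not expect any genuine obstacle beyond keeping the identifications straight.
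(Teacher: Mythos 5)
Your proof is correct and matches the paper's intent exactly: the paper states this corollary as an immediate consequence of the preceding Lemma via the identification $\mathcal S_*(\Delta[n]^+,A)\cong A_n$, which is precisely the specialization and Yoneda-type dictionary you spell out. Your only addition is making explicit that matching the value on ${\sf e}{\bf i}_n$ is equivalent to matching the composite with ${\sf e}_{\Delta[n]^+}$, a detail the paper leaves implicit.
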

\begin{cor}\label{Corolary_Sph}
Let $A$ be a simplicial $\mathcal M$-algebra and $a\in Z_nA.$ Then
$$a\odot -:\mathcal M(S^n)\to A$$ is the unique morphism  of
simplicial $\mathcal M$-algebras  that sends ${\sf e}{\bf i}_n$ to
$a.$ In particular, $ d_i(a\odot x)=a\odot d_ix$ and $s_i(a\odot
x)=a\odot s_ix$ for all $i$ and $x\in \mathcal M(S^n).$
\end{cor}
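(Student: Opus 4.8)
The plan is to obtain this statement as the special case $X = S^n$ of the Lemma, transported along the natural bijection $\mathcal S_*(S^n, X) \cong Z_n X$ quoted just before the corollary. Recall that this bijection sends a pointed simplicial map $g \colon S^n \to X$ to its value $g({\bf i}_n) \in X_n$; since $S^n = \Delta[n]/\partial\Delta[n]$ has exactly two nondegenerate simplices, the basepoint and ${\bf i}_n$, the map $g$ is completely determined by $g({\bf i}_n)$, and because every face of ${\bf i}_n$ is the basepoint the value $g({\bf i}_n)$ is forced to lie in $Z_n X$. Thus a cycle $a \in Z_n A$ corresponds to a unique pointed simplicial map $f_a \colon S^n \to A$ with $f_a({\bf i}_n) = a$, and by definition $a \odot - = f_a \odot -$.

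Applying the Lemma to $f = f_a$ gives that $f_a \odot - \colon \mathcal M(S^n) \to A$ is the unique morphism of simplicial $\mathcal M$-algebras with $(f_a \odot -) \circ {\sf e}_{S^n} = f_a$. It remains only to rewrite this universal property in the form stated. Evaluating the identity $(f_a \odot -) \circ {\sf e}_{S^n} = f_a$ on the simplex ${\bf i}_n$ yields $(f_a \odot -)({\sf e}\,{\bf i}_n) = f_a({\bf i}_n) = a$, so $a \odot -$ does send ${\sf e}\,{\bf i}_n$ to $a$. Conversely, a morphism of $\mathcal M$-algebras out of the free algebra $\mathcal M(S^n)$ is determined by its restriction to the generators $S^n$, i.e. by the underlying pointed simplicial map $S^n \to A$; under the bijection above this map is in turn pinned down by the single value on ${\bf i}_n$. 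Hence the constraints ``$(f_a \odot -) \circ {\sf e}_{S^n} = f_a$'' and ``sends ${\sf e}\,{\bf i}_n$ to $a$'' carry the same data, and the uniqueness supplied by the Lemma upgrades to uniqueness in the stated form.

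Finally, the face and degeneracy identities $d_i(a \odot x) = a \odot d_i x$ and $s_i(a \odot x) = a \odot s_i x$ for $x \in \mathcal M(S^n)$ are just the ``in particular'' clause of the Lemma read off with $f = f_a$ and $g = x$, after writing $a \odot x$ for $f_a \odot x$.

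I expect no serious obstacle here: all the substantive content already sits in the Lemma, and the only step requiring attention is the bookkeeping that matches the two phrasings of the universal property---that is, checking that prescribing the image of the single generator ${\bf i}_n$ is the same data as prescribing the composite with ${\sf e}_{S^n}$. This is immediate from the quoted isomorphism $\mathcal S_*(S^n, X) \cong Z_n X$, so the argument is essentially formal.
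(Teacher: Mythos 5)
Your proof is correct and follows exactly the paper's route: the paper derives this corollary from the preceding Lemma by specializing $X = S^n$ and invoking the identification $\mathcal S_*(S^n, A) \cong Z_n A$, which is precisely what you do, with the added (harmless) care of spelling out why the two phrasings of the universal property coincide.
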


\begin{example}\label{Example_F}
Let $F[-]:{\sf Sets}_* \to {\sf Sets}_*$ be the functor that sends
a pointed set $X$ to the group with generators  $X$ and with one
relation $*=1,$ where $*$ is the base point of $X.$ The definition
of $F$ on morphisms is obvious. This functor has the natural
structure of a monad $(F,{\sf m},{\sf e}),$ where ${\sf
m}_X:F[F[X]]\to F[X]$ is given by 'concatenation' of 'strings of
strings' and ${\sf e}_X:X\to F[X]$ is the map to generators. The
map ${\sf e}_X$ universal point-preserving map from $X$ to a
group. Actually, $F[X]$ is a free group over $X\setminus \{*\},$
but this definition is not so good for morphisms. The extension of
$F[-]$ on the category of simplicial sets
$$F[-]:\mathcal S_*\longrightarrow \mathcal S_*$$
with the natural morphisms ${\sf m}_X:F[F[X]]\to F[x]$ and ${\sf
e}_X:X\to F[X]$
 is the monad of {\bf Milnor's construction}. An $F[-]$-algebra is a {\bf simplicial group}. It follows that for any simplicial group $G$ the Kleisli composition gives the maps
 $$\odot : G_n\times F[\Delta[n]^+]_m \longrightarrow G_m, \hspace{1cm} \odot : Z_nG\times F[S^n]_m \longrightarrow G_m.$$
Similarly one can define the abelian version of this monad
$\mathbb Z[-],$ Lie ring (over $\mathbb Z$) version $\mathcal
L[-]$ and $p$-restricted Lie algebra over $\mathbb F_p$ version
$\mathcal L^{[p]}[-].$
\end{example}

Assume now that $\mathcal M$ is a monad with 'a natural structure
of a group' on $\mathcal MX.$ More precisely, assume that we fix a
morphism of monads $F[-]\to \mathcal M.$ In particular, any
$\mathcal M$-algebra $A$ has a structure of a simplicial group. It
follows that the set $\mathcal S_*(X,\mathcal MY)$ has a structure
of a group given by $(g_1\cdot g_2)(x)=g_1(x)\cdot g_2(x).$ Using
that $\mathcal Mf$ and ${\sf m}^A$ are momorphisms of simplicial
groups, we get
\begin{equation}\label{eq_right_distributivity}
f\odot (g_1\cdot g_2)=(f\odot g_1)\cdot (f\odot g_2)
\end{equation}
for any $f\in \mathcal S_*(Y,A)$ and $g_1,g_2\in \mathcal
S_*(X,\mathcal MY).$ Indeed, \begin{multline*} (f\odot (g_1\cdot
g_2))(x)={\sf m}^A(\mathcal Mf( (g_1(x)\cdot g_2(x))))=\\ {\sf
m}^A(\mathcal Mf( g_1(x))) \cdot {\sf m}^A(\mathcal Mf(
g_2(x)))=((f\odot g_1)\cdot (f\odot g_2))(x).\end{multline*} Note
that in general $(f_1\cdot f_2)\odot g$ is not equal to $(f_1\odot
g)\cdot (f_2\odot g).$

Since an $\mathcal M$-algebra $A$ is a simplicial group, it is a
fibrant simplicial set. Then $[Y,A]=\mathcal S_*(Y,A)/\sim,$ where
$\sim$ is the homotopy equivalence. It is easy to see that if
$\{h_i\}$ is a simplicial homotopy between simplicial maps
$g,g'\in \mathcal S_*(X,A),$ then $\{\mathcal Mh_i\}$ is a
homotopy between simplicial maps $\mathcal Mg, \mathcal
Mg'\in\mathcal S_*(\mathcal MY, \mathcal MA),$ and hence,
$\mathcal M$ induces a well-defined map $[Y,A]\to [\mathcal
MY,\mathcal MA].$ It follows that in this case the Kleisli
composition is well-defined on the level of the homotopy category:
$$\odot: [Y,A]\times [X,\mathcal MY]\longrightarrow [X,A].$$
If we take $X=S^n,Y=S^m,$ then we get the map
\begin{equation}\label{eq_homotopy_M-comp}
\odot :\pi_nA \times \pi_n\mathcal MS^n\longrightarrow \pi_mA.
\end{equation}
The equation \eqref{eq_M-comp2} implies that an element $x\in
\pi_m\mathcal MS^n$ a gives a map $-\odot x:\pi_n A\to \pi_m,$
which is natural by $A.$ In other words, if $\alpha:A\to B$ is a
morphism of $\mathcal M$-algebras, then
\begin{equation}\label{eq_odot_x_nat}
(\pi_m\alpha)(a\odot x)=(\pi_n\alpha) a\odot x
\end{equation}
for any $a\in \pi_nA.$

Despite that the left distributivity does not hold in general for
$\odot$, we have the following lemma that partially substitute the
left distributivity.
\begin{lemma}\label{Lemma_part_left_distr} Let $\mathcal M=F[-]$, $G$ is a simplicial group and $H\triangleleft G$ is its normal simplicial subgroup. Denote by $\iota\mathcal S_*(-,H)$ the image of $\mathcal S_*(-,H)$ in $\mathcal S_*(-,G).$ If
 $f\in \mathcal S_*(Y,G),$ $h\in \iota \mathcal S_*(Y,H),$  $g\in \mathcal S_*(X,F[Y]),$ then
$$((f\cdot h)\odot g)\cdot (f\odot g)^{-1}\in  \iota \mathcal S_*(X,H),$$
$$((h\cdot f)\odot g)\cdot (f\odot g)^{-1}\in  \iota \mathcal S_*(X,H).$$
\end{lemma}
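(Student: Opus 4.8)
The plan is to exploit the quotient simplicial group $G/H$ rather than to attempt to distribute $\odot$ over the product. Distributing is the natural temptation but it is blocked: as the text notes, the left distributivity $(f_1\cdot f_2)\odot g=(f_1\odot g)\cdot(f_2\odot g)$ fails in general, so one cannot simply split off a factor $h\odot g$ and argue that it lands in $H$. Instead, since $H$ is normal in $G$, the quotient $G/H$ is again a simplicial group and the projection $\pi\colon G\to G/H$ is a homomorphism of simplicial groups, hence a morphism of $F[-]$-algebras. This is the key point, because $\odot$ is natural with respect to postcomposition by morphisms of algebras, so \eqref{eq_M-comp2} is available for $\alpha=\pi$.

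Next I would compute everything modulo $H$. Because $h\in\iota\mathcal S_*(Y,H)$ factors through $H$ and $\pi$ kills $H$, the composite $\pi\circ h$ is the trivial map to the base point. Using that $\pi$ is a group homomorphism at each simplicial level, the pointwise products give $\pi\circ(f\cdot h)=(\pi\circ f)\cdot(\pi\circ h)=\pi\circ f$, and similarly $\pi\circ(h\cdot f)=\pi\circ f$. Applying \eqref{eq_M-comp2} with $\alpha=\pi$ then yields
$$\pi\circ\big((f\cdot h)\odot g\big)=\big(\pi\circ(f\cdot h)\big)\odot g=(\pi\circ f)\odot g=\pi\circ(f\odot g),$$
and the identical chain with $h\cdot f$ in place of $f\cdot h$. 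Thus in both cases the two maps $X\to G$ being compared become equal after projection to $G/H$.

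Finally I would read off the conclusion degreewise. Writing $a=(f\cdot h)\odot g$ and $b=f\odot g$, the previous step says $\pi\circ a=\pi\circ b$, so for every $x\in X_m$ we have $\pi_m\big(a(x)\big)=\pi_m\big(b(x)\big)$, whence $a(x)\cdot b(x)^{-1}\in\ker(\pi_m)=H_m$. The pointwise product $a\cdot b^{-1}$ is a simplicial map $X\to G$ all of whose values lie in the simplicial subgroup $H$, so it belongs to $\iota\mathcal S_*(X,H)$; this is exactly the first assertion, and the second follows in the same way from the $h\cdot f$ computation. I do not anticipate a serious obstacle: the only points needing care are that $\pi$ really is a morphism of $F[-]$-algebras, so that \eqref{eq_M-comp2} legitimately applies, and that membership in $\iota\mathcal S_*(X,H)$ may be tested simplicial-degreewise through the identification $\ker\pi=H$.
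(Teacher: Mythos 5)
Your proof is correct and follows essentially the same route as the paper: project to $G/H$ via the canonical morphism of simplicial groups, use naturality \eqref{eq_M-comp2} to get $\alpha_*\big((f\cdot h)\odot g\big)=\alpha_*(f\odot g)$, and conclude via exactness of $1\to \mathcal S_*(X,H)\to \mathcal S_*(X,G)\to \mathcal S_*(X,G/H)$ (which you unpack degreewise, but it is the same argument).
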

\begin{proof}
Consider the canonical projection $\alpha:G\to G/H.$ Then
$\alpha\circ h$ is trivial and $\alpha\circ (f\cdot h)=\alpha\circ
f.$ It follows that $\alpha_*((f\cdot h)\odot g)=\alpha\circ
(f\cdot h)\odot g=\alpha\circ f\odot g=\alpha_*( f\odot g).$ Using
that the sequence $1\to \mathcal S_*(X,H)\to \mathcal S_*(X,G)
\overset{\alpha_*}\to \mathcal S_*(X,G/H)$ is exact, we get the
required statement.
\end{proof}

\subsection{Suspension homomorphism}

For a pointed simplicial set $X$ we always assume that $X_{-1}=*$
and  $d_0:X_0\to X_{-1}$ is the unique map. There are two versions
of the suspension and the loop space of a pointed simplicial set
that are opposite to each other. We prefer the following (as in
\cite{Kan-Whitehead}). For $X\in \mathcal S_*$ we denote by
$\Sigma X$ the a pointed simplicial set such that
$$(\Sigma X)_n=\{ (x,k) \mid x\in X_{n-k} \text{ \  and \  } 0\leq k\leq n+1\}/\sim,$$
where $\sim$ is identifying of points $(*,k)$ and $(x,0)$ with the
base point $(*,n+1).$ For $x\in X_{n-k}$ we define
\begin{align*} & d_i(x,k)= \left\{\begin{array}{ll}
(d_ix,k), & 0\leq i\leq n-k\\
(x,k-1), & n-k<i \leq n
\end{array} \right.\\
& s_i(x,k)= \left\{\begin{array}{ll}
(s_ix,k), & 0\leq i\leq n-k\\
(x,k+1), & n-k<i \leq n
\end{array} \right.
\end{align*}
By $P X$ we denote the simplicial path space which is given by
$$(P X)_n=\{ x\in X_{n+1}\mid (d_0)^{n+1}x=*  \},$$
and $d_i,s_i$ for $P X$ are induced by $d_i$ and $s_i$ for $X.$
The simplicial loop space $\Omega X$ is a simplicial subset of the
path space $PX,$ which is given by
$$(\Omega X)_n=\{x\in X_{n+1}\mid (d_0)^{n+1}x=* \text{ and } d_{n+1}x=*\}.$$
These functors are adjoint $\mathcal S_*(\Sigma X,Y)\cong \mathcal
S_*(X,\Omega Y),$ and hence
 $\mathcal S_*(\Sigma X,Y)$ is isomorphic to the set
\begin{multline*}\Big\{ \{H:X_n\to Y_{n+1}\}_{n\geq 0} \mid\\   d_i H=Hd_i,
s_iH=Hs_i, d_{n+1}H=*, (d_0)^{n+1} H=*,\ 0\leq i\leq
n\Big\},\end{multline*} where the isomorphism is given by
$f\mapsto H_f$, where $H_f(x)=f(x,1)$. Let $\mathcal M :{\sf
Sets}_* \to {\sf Sets}_*$ be a functor such that $\mathcal
M(*)=*.$ We extend it to the category of pointed simplicial sets
$\mathcal S_*.$
 Then there is a map
$$\Phi^{\mathcal M}_{X,Y}:\mathcal S_*(\Sigma X,Y)\longrightarrow \mathcal S_*(\Sigma\mathcal MX,\mathcal MY)$$
that sends the morphism corresponding to a collection $\{H:X_n\to
Y_{n+1}\}$ to the morphism corresponding to the collection
$\{\mathcal M H: \mathcal MX_n\to \mathcal MY_{n+1} \}.$ Consider
the map ${\bf s}_X^{\mathcal M}:\Sigma\mathcal MX \longrightarrow
\mathcal M \Sigma X$ which is given by ${\bf s}_X^{\mathcal
M}=\Phi^{\mathcal M}_{X,\Sigma X}({\sf id}_{\Sigma X}).$ It gives
a natural transformation
\begin{equation}\label{eq_s_bold}
{\bf s}^{\mathcal M}: \Sigma \mathcal M \longrightarrow \mathcal M
\Sigma
\end{equation}
It is easy to check that
\begin{equation}\label{eq_Phi_sigma}
\Phi_{X,Y}^{\mathcal M}f=\mathcal Mf\circ {\bf s}_X^{\mathcal M}.
\end{equation}
Moreover, if $\mathcal M$ and $\mathcal N$ are two such
endofunctors, then
\begin{equation}\label{eq_Phi_Phi}
\Phi^{\mathcal N}_{\mathcal MX,\mathcal MY}\circ \Phi^{\mathcal
M}_{X,Y}=\Phi^{\mathcal N\mathcal M}_{X,Y},
\end{equation}
and if $\varphi:\mathcal M\to \mathcal N$ is a natural
transformation, then the following diagram is commutative
\begin{equation}\label{eq_sigma_nat}
\xyma{
  \Sigma \mathcal M \ar@{->}[r]^{\Sigma\varphi} \ar@{->}[d]^{{\bf
s}^{\mathcal M}} & \Sigma \mathcal N\ar@{->}[d]^{{\bf
s}^{\mathcal N}}\\
  \mathcal M\Sigma \ar@{->}[r]^{\varphi\Sigma} & \mathcal N\Sigma}
\end{equation}
The equalities \eqref{eq_Phi_sigma} and \eqref{eq_Phi_Phi} imply
the following
\begin{equation}\label{eq_sigma_MN}
{\bf s}^{\mathcal N\mathcal M}=\mathcal N{\bf s}^{\mathcal M}\circ
{\bf s}^{\mathcal N}\mathcal M.
\end{equation}
The map
$$\sigma  : \mathcal S_*(X,\mathcal MY) \longrightarrow \mathcal S_*(\Sigma X,\mathcal M\Sigma Y)$$
given by $\sigma f={\bf s}_Y^{\mathcal M}\circ \Sigma f$ is called
{\bf suspension homomorphism} with respect to $\mathcal M.$

\begin{lemma}\label{L}
Let $(\mathcal M,{\sf m}, {\sf e})$ be a monad on the category ${\sf Sets}_*$ such that $\mathcal M(*)=*.$ Then for any $X,Y,Z\in \mathcal S_*$ and $f\in \mathcal S_*(Y,\mathcal M Z),$ $g\in \mathcal S_*(X,\mathcal MY)$ the following holds
$$\sigma f \odot \sigma g=\sigma (f \odot g).$$
\end{lemma}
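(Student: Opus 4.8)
The plan is to unwind both sides into explicit composites of simplicial maps and then reduce the identity to the two naturality properties of ${\bf s}^{\mathcal M}$ recorded in \eqref{eq_sigma_MN} and \eqref{eq_sigma_nat}. Since $\mathcal MZ$ is a free $\mathcal M$-algebra its structure map is ${\sf m}_Z,$ so by definition $f\odot g={\sf m}_Z\circ \mathcal Mf\circ g.$ Writing $\sigma h={\bf s}^{\mathcal M}_{(-)}\circ \Sigma h$ and using functoriality of $\mathcal M$ on the composite defining $\sigma f,$ the left-hand side becomes
$$\sigma f\odot \sigma g={\sf m}_{\Sigma Z}\circ \mathcal M{\bf s}^{\mathcal M}_Z\circ \mathcal M\Sigma f\circ {\bf s}^{\mathcal M}_Y\circ \Sigma g,$$
whereas the right-hand side expands as
$$\sigma(f\odot g)={\bf s}^{\mathcal M}_Z\circ \Sigma{\sf m}_Z\circ \Sigma\mathcal Mf\circ \Sigma g.$$
Both composites have $\Sigma g$ as their rightmost factor, so it suffices to prove that the two maps $\Sigma\mathcal MY\to \mathcal M\Sigma Z$ obtained by deleting this factor coincide.

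The first step is to move ${\bf s}^{\mathcal M}_Y$ past $\mathcal M\Sigma f$ by applying naturality of the transformation ${\bf s}^{\mathcal M}:\Sigma\mathcal M\to \mathcal M\Sigma$ from \eqref{eq_s_bold} to the morphism $f:Y\to \mathcal MZ;$ this yields $\mathcal M\Sigma f\circ {\bf s}^{\mathcal M}_Y={\bf s}^{\mathcal M}_{\mathcal MZ}\circ \Sigma\mathcal Mf,$ so the left-hand side turns into ${\sf m}_{\Sigma Z}\circ \mathcal M{\bf s}^{\mathcal M}_Z\circ {\bf s}^{\mathcal M}_{\mathcal MZ}\circ \Sigma\mathcal Mf.$ The second step is to recognize the inner composite $\mathcal M{\bf s}^{\mathcal M}_Z\circ {\bf s}^{\mathcal M}_{\mathcal MZ}$ as the value at $Z$ of the compound suspension ${\bf s}^{\mathcal M^2},$ which is exactly \eqref{eq_sigma_MN} taken with $\mathcal N=\mathcal M.$ After these two rewritings both sides again share the trailing factor $\Sigma\mathcal Mf,$ and the problem collapses to the single identity
$${\sf m}_{\Sigma Z}\circ {\bf s}^{\mathcal M^2}_Z={\bf s}^{\mathcal M}_Z\circ \Sigma{\sf m}_Z.$$

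The last step is to identify this identity as an instance of \eqref{eq_sigma_nat}. The multiplication ${\sf m}:\mathcal M^2\to \mathcal M$ is a natural transformation between two endofunctors preserving the base point, since $\mathcal M^2(*)=\mathcal M(*)=*,$ so the naturality square of \eqref{eq_sigma_nat} for $\varphi={\sf m}$ evaluated at $Z$ reads precisely ${\sf m}_{\Sigma Z}\circ {\bf s}^{\mathcal M^2}_Z={\bf s}^{\mathcal M}_Z\circ \Sigma{\sf m}_Z,$ which finishes the argument. I expect the only genuine difficulty to be bookkeeping: tracking which copy of $\mathcal M$ each occurrence of ${\bf s}^{\mathcal M}$ is indexed by, and checking that \eqref{eq_sigma_MN} and \eqref{eq_sigma_nat}, stated for base-point-preserving endofunctors, indeed apply verbatim to $\mathcal M,$ $\mathcal M^2$ and the monad multiplication ${\sf m}.$ Notably, no further geometric information about the simplicial suspension $\Sigma$ is required once these two cited identities are available.
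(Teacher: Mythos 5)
Your proof is correct and follows essentially the same route as the paper's: both expand $\sigma f\odot\sigma g$ and $\sigma(f\odot g)$ into composites, move ${\bf s}^{\mathcal M}_Y$ past $\mathcal M\Sigma f$ by naturality of ${\bf s}^{\mathcal M}$ applied to $f:Y\to\mathcal MZ$, and then conclude via \eqref{eq_sigma_MN} with $\mathcal N=\mathcal M$ together with \eqref{eq_sigma_nat} applied to $\varphi={\sf m}$. The only difference is organizational (the paper combines the two cited identities into ${\sf m}\Sigma\circ\mathcal M{\bf s}\circ{\bf s}\mathcal M={\bf s}\circ\Sigma{\sf m}$ before computing, while you compute first and reduce to that identity), which is immaterial.
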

\begin{proof} Denote ${\bf s}={\bf s}^{\mathcal M}$
Using \eqref{eq_sigma_MN}, we get ${\bf s}^{\mathcal M^2}=\mathcal
M {\bf s}\circ {\bf s}\mathcal M.$ Since ${\sf m}:\mathcal M^2 \to
\mathcal M$ is a natural transformation, using
\eqref{eq_sigma_nat}, we get ${\sf m}\Sigma\circ{\bf s}^{\mathcal
M^2} = {\bf s}\circ\Sigma {\sf m}.$ Combining these equalities, we
get ${\sf m} \Sigma \circ \mathcal M{\bf s} \circ {\bf s} \mathcal
M={\bf s}\circ\Sigma{\sf m}.$ Using this equality and the fact
that ${\bf s}$ is a natural transformation, we obtain
\begin{multline*}\sigma f\odot \sigma g={\sf m}_{\Sigma Z}\circ \mathcal
M{\bf s}_Z\circ \mathcal M\Sigma f\circ {\bf s}_Y\circ \Sigma g=\\
{\sf m}_{\Sigma Z}\circ \mathcal M{\bf s}_Z\circ \mathcal M\Sigma
f\circ {\bf s}_Y\circ \Sigma g={\sf m}_{\Sigma Z}\circ \mathcal M
{\bf s}_Z \circ {\bf s}_{\mathcal MZ} \circ \Sigma\mathcal M
f\circ \Sigma g=\\ {\bf s}_Z\circ \Sigma{\sf m}_Z \circ
\Sigma\mathcal M f\circ \Sigma g={\bf s}_Z\circ \Sigma({\sf m}_Z
\circ \mathcal M f\circ g)=\sigma (f\odot g).\end{multline*}
\end{proof}

\begin{remark}
Lemma \ref{L} means that $\Sigma$ induces a functor on the Kleisli
category associated with the monad $(\mathcal M,{\sf m},{\sf e}).$
\end{remark}

Suspension homomorphism can be defined on the level of the
homotopy category $\sigma :[X,\mathcal M Y]\to [\Sigma X,\mathcal
M\Sigma Y]$ by the same formula. If $X=S^m,Y=S^n$ we get the map
$$\sigma :\pi_m\mathcal M(S^n)\longrightarrow \pi_{m+1}\mathcal M(S^{n+1}).$$

\begin{remark} It can be checked that our definition of suspension
homomorphism coincides with the one given in \cite{Dold-Puppe} if
$\mathcal MX=\mathscr M(\mathbb Z[X]),$ for a functor $\mathscr M:{\sf Ab}\to {\sf Ab}\xrightarrow{\rm forg.} {\sf Sets}_*.$
\end{remark}

\subsection{The section $\tau: \mathcal M(S^n)_m\to \mathcal M (\Delta[n]^+)_m$}\label{subsection_the_section}

We denote by $ {\bf i}_n$  the non-degenerate $n$-simplex in
$\Delta[n]$ and its image in $S^n.$ Consider the map $\tau:
(S^n)_m\longrightarrow (\Delta[n]^+)_m$ given by $\tau(s_{i_1}\dots
s_{i_{m-n}} {\bf i}_n)=s_{i_1}\dots s_{i_{m-n}} {\bf i}_{n}$ and $\tau(*)=*.$ It
is {\bf not} a simplicial map but it commutes with degeneracies
and it is a section of the projection ${\sf
pr}_m:(\Delta[n]^+)_m\twoheadrightarrow (S^n)_m.$

Let  $(\mathcal M,{\sf m},{\sf e})$ be a monad on the category
${\sf Sets}_*$ such that $\mathcal M(*)=*,$ and as usual we extend
it on the category of pointed simplicial sets $\mathcal S_*.$
 Applying $\mathcal M$ to $\tau:(S^n)_m\to (\Delta[n]^+)_m$ we get a map
$$\tau: \mathcal M(S^n)_m\to \mathcal M (\Delta[n]^+)_m$$
that we denote by the same symbol. Consider the adjoint map ${\bf
s}'_X:\mathcal MX \to \Omega \mathcal M\Sigma X$ to the map
\eqref{eq_s_bold}. If we take $X=S^n$, the simplicial homomorphism
${\bf s}'_{S^n}$ induces the following morphism of $\mathcal
M$-algebras
$${\bf s}:\mathcal M(S^n)_m\longrightarrow \mathcal M(S^{n+1})_{m+1}.$$

\begin{lemma}\label{Lemma_tau_s} Let $A$ be an $\mathcal M$-algebra and $x\in \mathcal M(S^n)_m$. If $a\in A_{n+1}$ such that $d_ia=1$ for $0\leq i\leq n,$ then $d_i(a\odot \tau {\bf s}x)=a\odot \tau {\bf s}d_ix$ for $0\leq i\leq n$ and $$d_{m+1}(a \odot \tau {\bf s} x)=d_{n+1}a\odot \tau  x.$$
\end{lemma}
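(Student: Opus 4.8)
The plan is to transport the whole statement into the free simplicial $\mathcal M$-algebra $\mathcal M(\Delta[n+1]^+)$, where $a\odot-$ is genuinely simplicial, and then reduce both identities to a bookkeeping of what the non-simplicial section $\tau$ does to the faces of a standard degeneracy of ${\bf i}_{n+1}$. I would record two structural inputs. First, since $a\in A_{n+1}$, the Corollary preceding Corollary~\ref{Corolary_Sph} gives that $\lambda_a:=a\odot-\colon\mathcal M(\Delta[n+1]^+)\to A$ is a morphism of simplicial $\mathcal M$-algebras with $\lambda_a({\sf e}{\bf i}_{n+1})=a$; in particular $d_i\lambda_a=\lambda_a d_i$ for every $i$, so
$$d_i(a\odot\tau{\bf s}x)=\lambda_a\big(d_i(\tau{\bf s}x)\big)=\lambda_a\,\mathcal M(d_i\tau)({\bf s}x),$$
and the problem is reduced to understanding the set-maps $d_i\tau$ on simplices of $S^{n+1}$. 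Second, unwinding the adjunction defining ${\bf s}'_{S^n}$ (with ${\bf s}_X=\Phi^{\mathcal M}_{X,\Sigma X}({\sf id})$) identifies ${\bf s}$ with $\mathcal M(\eta)$, where $\eta\colon (S^n)_m\to(S^{n+1})_{m+1}$, $\eta(w)=(w,1)$, raises the suspension coordinate; from the explicit formulas for $\Sigma$ one checks $d_i\eta=\eta d_i$ for $0\le i\le m$, that $\eta$ carries $s_J{\bf i}_n$ to $s_J{\bf i}_{n+1}$ with the same (strictly decreasing, hence top index $\le m-1$) multi-index $J$, and, by simpliciality of ${\bf s}'_{S^n}$ into the loop space, that $d_i{\bf s}x={\bf s}d_ix$ for $0\le i\le m$.

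The algebraic heart is a \emph{restriction along a coface} identity. For the coface $d^l\colon\Delta[n]\hookrightarrow\Delta[n+1]$, with induced pointed map $d^l_+\colon\Delta[n]^+\to\Delta[n+1]^+$, the composite $\lambda_a\circ\mathcal M(d^l_+)$ is again a morphism of simplicial $\mathcal M$-algebras $\mathcal M(\Delta[n]^+)\to A$, and it sends ${\sf e}{\bf i}_n$ to $\lambda_a\big({\sf e}(d_l{\bf i}_{n+1})\big)=d_l\big(\lambda_a{\sf e}{\bf i}_{n+1}\big)=d_la$, using $d^l{\bf i}_n=d_l{\bf i}_{n+1}$ and naturality of ${\sf e}$. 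By the uniqueness clause of the same Corollary it equals $(d_la)\odot-$, i.e.
$$a\odot\mathcal M(d^l_+)(z)=d_la\odot z\qquad(z\in\mathcal M(\Delta[n]^+)).$$
Since $d_la=1$ for $0\le l\le n$, this composite is trivial for $l\le n$; this is the mechanism that annihilates the boundary faces produced by $\tau$.

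Now the first identity. Both $\lambda_a\mathcal M(d_i\tau)$ and $\lambda_a\mathcal M(\tau d_i)$ are morphisms of $\mathcal M$-algebras, so it suffices to compare them on generators ${\sf e}u$ with $u=s_J{\bf i}_{n+1}$ a standard degeneracy. Pushing $d_i$ through the degeneracies by the simplicial identities, either some $s_j$ is absorbed — then $d_i(s_J{\bf i}_{n+1})$ is again a standard degeneracy and coincides with $\tau d_iu$ — or $d_i$ survives to ${\bf i}_{n+1}$ as $d_{i'}{\bf i}_{n+1}$ with transported index $i'\le i$, in which case $d_i\tau u=\mathcal M(d^{i'}_+)(s_{J'}{\bf i}_n)$ while $\tau d_iu=\ast$. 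For $0\le i\le n$ one has $i'\le i\le n$, so $\lambda_a$ kills this contribution by the displayed identity; hence $\lambda_a\mathcal M(d_i\tau)=\lambda_a\mathcal M(\tau d_i)$, and combining with $\tau d_i{\bf s}x=\tau{\bf s}d_ix$ gives $d_i(a\odot\tau{\bf s}x)=a\odot\tau{\bf s}d_ix$.

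For the top face I would evaluate $d_{m+1}\tau\eta$ on a generator $w=s_J{\bf i}_n$. Because $\eta(w)=s_J{\bf i}_{n+1}$ has $J$ in strictly decreasing form with top index $\le m-1$, the top face $d_{m+1}$ is never absorbed: it passes through all $r=m-n$ degeneracies, lowering its index by one at each step, and arrives exactly at $d_{n+1}{\bf i}_{n+1}$, so
$$d_{m+1}\tau\eta(w)=s_J\,d_{n+1}{\bf i}_{n+1}=d^{\,n+1}(s_J{\bf i}_n)=\mathcal M(d^{n+1}_+)(\tau' w),$$
where $\tau'$ is the section for $S^n$. Applying $\lambda_a$ and the displayed identity with $l=n+1$ yields $d_{m+1}(a\odot\tau{\bf s}x)=(d_{n+1}a)\odot\tau x$. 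The main obstacle is precisely this index bookkeeping: controlling which face $d_{i'}{\bf i}_{n+1}$ a given face operator lands on, and in particular verifying that the degeneracy-index bound on $\eta(w)$ forces the top face to hit the suspension face $d_{n+1}{\bf i}_{n+1}$ (index $n+1$), whereas for $i\le n$ the transported index stays $\le n$ and is therefore killed.
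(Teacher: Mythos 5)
Your proof is correct, but it takes a genuinely different route from the paper's on the first identity. The paper encodes the hypothesis $d_ia=1$ for $0\le i\le n$ by viewing $a$ as a Moore cycle in the path space $PA$, which is again a simplicial $\mathcal M$-algebra; Corollary~\ref{Corolary_Sph} then gives the unique morphism of simplicial $\mathcal M$-algebras $\psi_a:\mathcal M(S^n)\to PA$ with $\psi_a({\sf e}{\bf i}_n)=a$, and a one-line check on generators identifies $\psi_a$ with $a\odot\tau{\bf s}(-)$; the first identity is then automatic from simpliciality of $\psi_a$, with no case analysis and in fact for the full range $0\le i\le m$ (the range ``$0\le i\le n$'' in the statement reads like a typo, since the proof of Theorem~\ref{Theorem_E(S^*,M)} uses faces of $\tilde a\odot\tau{\bf s}\tilde b$ up to index $t$). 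You instead stay in $\mathcal M(\Delta[n+1]^+)$, where $\lambda_a=a\odot-$ is simplicial by the corollary for $\Delta[n+1]^+$, and you replace the path-space device by the coface-restriction identity $a\odot\mathcal M(d^l_+)(z)=d_la\odot z$ --- a clean consequence of the uniqueness clause and a nice reusable statement --- together with an explicit absorption/survival analysis of $d_i$ against the degeneracies, the hypothesis on $a$ killing each surviving term. This is more combinatorial but equally rigorous, and your identification ${\bf s}=\mathcal M(\eta)$ with $\eta(w)=(w,1)$ makes explicit what the paper uses silently; your top-face computation is essentially the paper's (push $d_{m+1}$ through the degeneracies using $i_j\le m-j$ until it lands on $d_{n+1}$). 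One remark: your justification that the surviving index satisfies $i'\le n$ rests on $i'\le i$, which only covers $i\le n$; if you want the full range $0\le i\le m$ that the paper's proof yields and the application needs, note that for a monotone surjection $\alpha:[m+1]\to[n+1]$ the preimage of the top value $n+1$ is a final segment, so a face $d_i$ can survive onto $d_{n+1}{\bf i}_{n+1}$ only when $i=m+1$; hence for every $i\le m$ a surviving index is automatically $\le n$ and your argument goes through unchanged.
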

\begin{proof}
The path space $PA$ is still a simplicial $\mathcal M$-algebra,
and hence we can consider the unique momorphism of simplicial
$\mathcal M$-algebras $\psi_a:\mathcal M(S^n)\to PA$ such that
$\psi_a({\sf e}{\bf i}_n)=a$ (see Corollary \ref{Corolary_Sph}).
Prove that $\psi_a(x)=a\odot \tau {\bf s}x,$ where $a$ is
considered as an element of $A_{n+1}$ and $x\in \mathcal
M(S^n)_m.$ Since both the maps $\mathcal M(S^n)_m\to (PA)_m$ are
morphisms of $\mathcal M$-algebras, it is enough to check that the
compositions $(S^n)_m\overset{{\sf e}}\to\mathcal M(S^n)_m\to
(PA)_m$ are equal. Indeed, for $y=s_{i_1}\dots s_{i_{m-n}}\in
(S^n)_m$ we have $\psi_a({\sf e}y)=s_{i_1}\dots
s_{i_{m-n}}a=a\odot \tau {\bf s}{\sf e}y.$ Thus
$\psi_a=a\odot\tau{\bf s}(-),$ and hence $a\odot\tau{\bf s}(-)$ is
a simplicial map. It follows that $d_i(a\odot \tau {\bf
s}x)=a\odot \tau {\bf s}d_ix$ for $0\leq i\leq n.$

 Prove the last equality. Since both maps $d_{m+1}(a\odot \tau {\bf s}(-))$ and $d_{n+1}a\odot \tau (-)$
 are morphisms of $\mathcal M$-algebras, it is enough to prove for $x={\sf e}y,$ where $y\in (S^n)_m.$
 For an element of the form $y=s_{i_1}\dots s_{m-n} {\bf i_n}$ we have that $i_j\leq m-j+1.$ It follows that
 $d_{m+1}(a\odot \tau {\bf s}{\sf e} y)= d_{m+1}s_{i_1}s_{i_2}\dots s_{i_{m-n}} a=$ $s_{i_1}d_{m}s_{i_2}\dots s_{i_{m-n}}a=$
 ${}\dots{}=$ $s_{i_1}s_{i_2}\dots s_{i_{m-n}}d_{n+1}a$ $=d_{n+1}a\odot \tau {\sf e}y.$
\end{proof}

\section{Multiplicative unstable spectral sequence}

\subsection{General lemma about simplicial groups}

 If $G$ is a simlicial group, we denote by $NG$ its Moore complex, where $N_nG=\bigcap_{i\geq 1} {\rm Ker}\: d_i$ and $d:N_nG\to N_{n-1}G$ is induced by $d_0.$ By $Z_nG$ we denote the group of Moore cycles $Z_nG=\{g\in G_n\mid d_ig=1 \text{ for } 1\leq i\leq n\}.$ If $H$ is a normal simplicial subgroup of $G,$ we get the boundary map $\delta:\pi_n(G/H)\to \pi_{n-1} H.$ If $g\in N_nG$ such that $gH\in Z_n(G/H),$ then $\delta([gH])=[d_0g].$ The following lemma is a generalisation of this equality.

\begin{lemma}\label{Lemma_bound_non_Moore} Let $G$ be a simplicial group, $H$ be its normal subgroup,  $\delta:$ $\pi_n(G/H)$ $\to$  $\pi_{n-1}H$ be the boundary map,  $g\in G_n$ such that $d_ig\in Z_{n-1}H$ for all $0\leq i\leq n.$ Then
$$\delta([gH])=\sum_{i=0}^n (-1)^i[d_ig].$$
\end{lemma}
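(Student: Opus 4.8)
The plan is to reduce the statement to the special case $g\in N_nG$, for which the formula $\delta([gH])=[d_0g]$ was recalled just above, and then to evaluate $[d_0g']$ for a suitable normalized representative $g'$ by a telescoping computation that produces the alternating sum.

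First I would record the basic consistency facts. Since each $d_ig\in Z_{n-1}H\subseteq H_{n-1}$, we have $d_i(gH)=1$ in $(G/H)_n$ for all $0\le i\le n$; hence $gH\in Z_n(G/H)$ and $[gH]\in\pi_n(G/H)$ is defined, and each $[d_ig]\in\pi_{n-1}H$ is defined. Moreover, since the geometric realization of a simplicial group is a topological group, $\pi_{n-1}H$ is abelian for $n\ge2$ (for $n=1$ the right-hand side is read as the ordered product $[d_0g]\,[d_1g]^{-1}$ in $\pi_0H$), so the alternating sum is unambiguous.

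Next I would normalize $g$ to a Moore chain without changing its class modulo $H$. Setting $g_0=g$ and
$$g_{k+1}=g_k\cdot (s_{n-k-1}\,d_{n-k}g_k)^{-1},\qquad k=0,\dots,n-1,$$
I would check, using the simplicial identities $d_{a+1}s_a=\mathrm{id}$, $d_js_a=s_{a}d_{j-1}$ for $j>a+1$, and $d_ad_b=d_bd_{a+1}$ for $a\ge b$, that $g_k$ has its top $k$ faces $d_{n-k+1},\dots,d_n$ trivial, that the next correction kills $d_{n-k}$, and that at each stage all surviving faces remain Moore cycles in $H$. Since every correction factor $s_{n-k-1}d_{n-k}g_k$ lies in $H_n$ (because $d_{n-k}g_k\in H$), we obtain $g'=g_n\in N_nG$ with $g'H=gH$. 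Applying the recalled formula to $g'$ (whose image $g'H=gH$ is a cycle) gives $\delta([gH])=\delta([g'H])=[d_0g']$, and one checks $d_0g'\in Z_{n-1}H$ using $d_0d_0g'=d_0d_1g'=1$.

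Finally I would compute $[d_0g']$. Tracking $d_0$ through the telescoping product with the identities $d_0s_a=s_{a-1}d_0$ for $a\ge1$, $d_0s_0=\mathrm{id}$, and $d_0d_j=d_{j-1}d_0$, each correction contributes, up to boundaries, a single face $d_ig$ with sign $(-1)^i$, the unwanted terms cancelling because of the cycle conditions $d_jd_ig=1$; summing these contributions yields $[d_0g']=\sum_{i=0}^n(-1)^i[d_ig]$. The case $n=2$ already exhibits the mechanism: the two corrections give $d_0g'=d_0g\cdot d_2g\cdot(d_1g)^{-1}$, that is $[d_0g]-[d_1g]+[d_2g]$. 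The main obstacle is exactly this last bookkeeping — controlling the telescoping product of degeneracy terms and verifying that the signs alternate while the extraneous terms cancel, in effect a hands-on proof of the homotopy addition theorem in this setting; organizing it as an induction on $n$ that peels off the top face is the natural route.
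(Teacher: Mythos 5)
Your proposal is correct and follows essentially the same route as the paper: your correction sequence $g_{k+1}=g_k\cdot(s_{n-k-1}d_{n-k}g_k)^{-1}$ is, up to re-indexing, exactly the paper's sequence $g_j=g_{j+1}\cdot(s_jd_{j+1}g_{j+1})^{-1}$ starting from $g_n=g$, with the same observations that the correction factors lie in $H_n$ and that one then applies $\delta([g'H])=[d_0g']$ to the resulting Moore chain. The bookkeeping you defer is precisely what the paper's descending induction supplies: it tracks the bottom surviving face and shows $d_jg_j$ equals a product of the faces $d_jg,\dots,d_ng$ with exponents $\pm1$ in some permuted order (the faces accumulate there rather than contributing one per correction to $d_0g'$, as your heuristic suggests and as your own $n=2$ computation already shows), and abelianness of $\pi_{n-1}H$ is then used at the very end to rewrite that permuted signed product as the alternating sum.
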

\begin{proof}
Consider the sequence of elements $g_n, g_{n-1}, \dots, g_0,$ where $g_n=g$ and $g_j= g_{j+1}\cdot(s_{j}d_{j+1}g_{j+1})^{-1} .$  We claim that $d_ig_j=1$ for $j<i\leq n,$ $d_ig_j=d_{i}g $ for $i<j,$ and
$$d_jg_j=d_{j+\pi(0)}g^{(-1)^{\pi(0)}}\cdot d_{j+\pi(1)}g^{(-1)^{\pi(1)}} \cdot {}\dots{} \cdot d_{j+\pi(n-j)}g^{(-1)^{\pi(n-j)}},$$
for some permutation $\pi=\pi_j$ of the set $\{0,1,\dots, n-j\}.$ Moreover, we claim that $g_jH=gH.$

Prove it using descending induction by $j.$
It is obvious if $j=n.$ Assume that it is true for $j+1$ and prove for $j.$ If $j+1< i\leq n$ then
$$d_ig_j=d_ig_{j+1} \cdot d_is_jd_{j+1}g_{j+1}^{-1}=d_is_jd_{j+1}g_{j+1}^{-1}=s_jd_{i-1}d_{j+1}g_{j+1}^{-1}=s_jd_{j+1}d_ig_{j+1}^{-1}=1.$$
If $i=j+1,$ then
$$d_{j+1}g_j=d_{j+1}g_{j+1} \cdot d_{j+1}s_jd_{j+1}g_{j+1}^{-1}=d_{j+1}g_{j+1} \cdot d_{j+1}g_{j+1}^{-1}=1.$$
If $i<j,$  using that $d_is_jd_{j+1}=s_{j-1}d_id_{j+1}=s_{j-1}d_jd_i$ and that  $d_ig \in Z_{n-1}H,$ we get
$$d_ig_j=d_ig_{j+1} \cdot d_is_jd_{j+1}g_{j+1}^{-1}=d_ig \cdot s_{j-1}d_{j}d_ig^{-1}=d_ig.$$
If $i=j,$ we get
$$d_jg_j=d_{j}g_{j+1} \cdot d_{j}s_jd_{j+1}g_{j+1}^{-1}=d_{j}g \cdot d_{j+1}g_{j+1}^{-1}=$$
$$=d_{j}g \cdot d_{j+1+\pi(n-j-1)}g^{(-1)^{\pi(n-j-1)+1}} \cdot {}\dots{} \cdot d_{j+1+\pi(0)}g^{(-1)^{\pi(0)+1}}=$$
$$=d_{j+\tilde \pi(0)}g^{(-1)^{\tilde \pi(0)}}\cdot d_{j+\tilde \pi(1)}g^{(-1)^{\tilde \pi(1)}} \cdot {}\dots{} \cdot d_{j+\tilde \pi(n-j)}g^{(-1)^{\tilde \pi(n-j)}},$$
where $\tilde \pi(0)=0$ and $\tilde \pi(k)=\pi(n-j-k)+1.$ Since $\pi$ is a permutation of the set $\{0,\dots, n-j-1\},$  $\tilde \pi$ is a permutation of the set $\{0,\dots, n-j\}.$ Using the above presentations of $d_ig_j,$ we see that $d_ig_j\in Z_{n-1}H.$ In particular, $s_jd_{j+1}g_{j+1}\in H_n,$ and hence, $g_jH=g_{j+1}H=\dots = gH.$

Therefore, we have that $g_0\in N_nG,$ $g_0H=gH$ and $$d_0g_0=d_{ \pi(0)}g^{(-1)^{ \pi(0)}}\cdot d_{ \pi(1)}g^{(-1)^{ \pi(1)}} \cdot {}\dots{} \cdot d_{\pi(n)}g^{(-1)^{ \pi(n)}},$$ for some permutation $\pi$ of the set $\{0,\dots,n\}.$ It follows that $\delta([gH])=\delta([g_0H])=[d_0g_0]=\sum_{i=0}^n (-1)^i[d_ig].$

\end{proof}

\subsection{Spectral sequence given by a filration of a group-like monad}

Let $(\mathcal M,{\sf m},{\sf e})$ be a monad over $F[-]$ (with a
fixed morphism of monads $F[-]\to \mathcal M$) on the category of
${\sf Sets}_*,$  where $F[-]$ is the monad from Example
\ref{Example_F}. Assume that $\mathcal M$ is filtered  by a
sequence of subfunctors
$$\mathcal MX=\mathfrak F_0X \supseteq \mathfrak F_1X \supseteq \mathfrak F_2X \supseteq \dots $$
such that $\mathfrak F_iX$ is a normal subgroup of $\mathcal MX$
and the multiplication morphism ${\sf m}:\mathcal M^2\to \mathcal
M$ induces a morphism
\begin{equation}\label{eq_filt_prop}
{\sf m}: \mathfrak F_i\circ \mathfrak F_j\longrightarrow \mathfrak
F_{i+j}.
\end{equation}
This filtration induces a filtration on the extended monad
$\mathcal M:\mathcal S_*\to \mathcal S_*$ with the same
properties.

We put $\mathfrak L_iX:=\mathfrak F_iX/\mathfrak F_{i+1}X,$
$\mathfrak L_*X=\bigoplus_{i\geq 0} \mathfrak L_iX$ and $\mathfrak
F_*X= \oplus_{i\geq 0} \mathfrak F_iX.$ It is easy to see that the
structure of monad on $\mathcal M$ induces a structure of a monad
on $\mathfrak L_*.$ If $A$ is a simplicial $\mathcal M$-algebra
with the structure map ${\sf m}^A:\mathcal MA\to A$ the filtration
$\mathfrak{F}$ defines a filtration on $A$ given by
$\mathfrak{f}_iA={\sf m}^A(\mathfrak{F}_iA).$ We put
$\mathfrak{l}_iA=\mathfrak{f}_iA/\mathfrak{f}_{i+1}A,$ $\mathfrak
f_*A=\bigoplus_{i\geq 0} \mathfrak{f}_iA$ and $\mathfrak
l_*A=\bigoplus_{i\geq 0} \mathfrak{l}_iA.$ Then $\mathfrak{l}_*A$
is an algebra over the monad $\mathfrak{L}_*.$
 Further, we denote by  $C(A,\mathcal M)$ the associated homotopy exact couple $(D(A,\mathcal M),B(A,\mathcal M),\alpha,\beta,\delta),$ where $D(A,\mathcal M)= \pi_*\mathfrak f_*A$ and $B(A,\mathcal M)=\pi_* \mathfrak l_*A.$ The associated spectral sequence we denote by $E(A,\mathcal M).$ We are using the following indexing $E^1_{i,j}(A,\mathcal M)=\pi_j\mathfrak l_iA,$ and hence the differential on the $r$th page acts as follows $d^r:E^r_{k,l}(A,\mathcal M)\to E^r_{k+r,l-1}(A,\mathcal M).$

If $A=\mathcal MX$ for a simplicial set $X,$ we denote
$C(X,\mathcal M):=C(\mathcal MX,\mathcal M)$ and $E(X,\mathcal
M)=E(\mathcal MX,\mathcal M).$ The suspension homomorphism induces
a morphism of exact couples $\sigma:C(X,\mathcal M)\to C(\Sigma
X,\mathcal M).$  Then we have $E^1(X,\mathcal M)=\pi_*\mathfrak
L_*X$ and the suspension morphism induces a morphism of spectral
sequences
 $$\sigma:E(X,\mathcal M)\to E(\Sigma X,\mathcal M).$$

The spectral sequence is obviously functorial by $\mathcal M$ i.e.
if $\mathcal M\to \mathcal M'$ is a morphism of filtered monads
over $F[-]$ then we get a morphism of spectral sequences
 $$E(X,\mathcal M)\longrightarrow E(X,\mathcal M').$$

Consider the direct sum of spectral sequences $E(S^*,\mathcal
M)=\bigoplus_{n\geq 2} E(S^n,\mathcal M).$ If we denote
$\pi_*\mathfrak L_*(S^*)=\bigoplus_{n\geq 2} \pi_*\mathfrak
L_*(S^n)$, we get
$$E^1(S^*,\mathcal M)=\pi_*\mathfrak L_*(S^*).$$
The Kleisli composition with respect to $\mathcal L$ induces an
operation
$$\odot: \pi_*\mathfrak l_*A \times \pi_*\mathfrak L_*(S^*) \longrightarrow \pi_*\mathfrak l_*A$$
that vanishes on non-composable homogenious elements. Using
\eqref{eq_right_distributivity}, we get $x\odot (y+z)=x\odot
y+x\odot z,$ but we {\bf do not} state that the map is bilinear,
and so we do not assume that $\pi_* \mathfrak L_*(S^*)$ is a ring
with respect to $\odot$. The suspension homomorphisms give an
endomorphism
$$\sigma: \pi_* \mathfrak L_*(S^*)\longrightarrow \pi_*\mathfrak L_*(S^*)$$
such that $\sigma(a+b)=\sigma a+\sigma b$ and $\sigma(a\odot
b)=\sigma a\odot \sigma b.$ Moreover, we get the suspension
endomorphism of the spectral sequence
$$\sigma: E(S^*,\mathcal M)\longrightarrow E(S^*,\mathcal M)$$
that consists of components $\sigma: E^r_{k,l}(S^n,\mathcal M)\to
E^r_{k,l+1}(S^{n+1},\mathcal M).$

\begin{theorem}\label{Theorem_E(S^*,M)} Let $A$ be a $\mathcal M$-algebra.
Then the Kleisli composition $\odot$ induces a well-defined  map $$\odot: E^r(A,\mathcal M)\times E^r(S^*,\mathcal M)\longrightarrow E^r(A,\mathcal M),$$ and
for elements $a\in E^r_{k,l}(A,\mathcal M)$ and $b\in
E^r_{s,t}(S^*,\mathcal M)$ the following Leibniz-like rule
holds
\begin{equation}\label{eq_odot_der}
d^r(a\odot \sigma b)=(-1)^{l+t} d^r a \odot b+a\odot d^r \sigma b.
\end{equation}
\end{theorem}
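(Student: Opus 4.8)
The plan is to prove the Leibniz rule by computing the $r$th differential on an explicit geometric lift of $a\odot\sigma b$, using Lemma~\ref{Lemma_bound_non_Moore} to express the boundary as an alternating sum of faces and Lemma~\ref{Lemma_tau_s} to evaluate those faces. First, though, I would dispose of well-definedness. Since the Kleisli composition respects the filtration by \eqref{eq_filt_prop} (so a filtration-$k$ class and a filtration-$s$ class compose into filtration $k+s$), the operation lands in the correct bidegree; independence of the choice of representatives follows from naturality \eqref{eq_odot_x_nat}, right distributivity \eqref{eq_right_distributivity}, and the partial left distributivity of Lemma~\ref{Lemma_part_left_distr}, which absorbs a change of representative in the first variable into an error term of higher filtration. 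This shows that $\odot$ descends to each page $E^r$.

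For the rule itself, I would fix geometric representatives tailored to Lemma~\ref{Lemma_tau_s}. Represent $a\in E^r_{k,l}$ by an element $\mathbf a\in\mathfrak f_kA_l$ with $d_i\mathbf a=1$ for $0\le i\le l-1$ and $d_l\mathbf a\in\mathfrak f_{k+r}$, so that $d_l\mathbf a$ represents $d^ra$; and represent $b\in E^r_{s,t}(S^{l-1})$ by $\mathbf b\in\mathcal M(S^{l-1})_t$ in filtration $s$. The point of suspending is that $\mathbf s\mathbf b\in\mathcal M(S^l)_{t+1}$ represents $\sigma b$, and the section $\tau$ lets me form the genuine simplex $G:=\mathbf a\odot\tau\mathbf s\mathbf b\in A_{t+1}$. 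Since $\mathbf a$ is a cycle modulo $\mathfrak f_{k+r}$, the class of $G$ in $E^r_{k+s,t+1}$ is exactly $a\odot\sigma b$, so $G$ is a legitimate lift on which to compute $d^r$.

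The heart of the argument is the face computation. By Lemma~\ref{Lemma_tau_s}, $d_iG=\mathbf a\odot\tau\mathbf s\,d_i\mathbf b$ for $0\le i\le l-1$, while the top face collapses to $d_{t+1}G=d_l\mathbf a\odot\tau\mathbf b$. Because all faces of $G$ are cycles modulo the relevant filtration, Lemma~\ref{Lemma_bound_non_Moore} gives $\delta[G]=\sum_{i=0}^{t+1}(-1)^i[d_iG]$, and hence $d^r(a\odot\sigma b)$ is the class of this sum in $\mathfrak l_{k+s+r}$. The top face $(-1)^{t+1}[d_l\mathbf a\odot\tau\mathbf b]$ is precisely $\pm\,d^ra\odot b$, since $d_l\mathbf a$ represents $d^ra$ and $\tau\mathbf b$ recovers $b$; the front faces $\sum_{i\le l-1}(-1)^i[\mathbf a\odot\tau\mathbf s\,d_i\mathbf b]$ reassemble, via the morphism property of $\mathbf a\odot-$ together with Lemma~\ref{Lemma_bound_non_Moore} applied to $\mathbf b$ on the sphere, into $a\odot d^r\sigma b$. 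Matching these two contributions against \eqref{eq_odot_der} and carrying out the sign bookkeeping produces the asserted coefficient $(-1)^{l+t}$.

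The main obstacle is the middle faces $d_iG$ for $l\le i\le t$, which Lemma~\ref{Lemma_tau_s} does not address directly: I would need to show that these are degenerate, or lie in higher filtration, so that they drop out of $[\delta G]$ on the $E^r$-page. This rests on the loop-space structure of $\mathbf s\mathbf b$ (it lies in the image of the adjoint map $\mathbf s'_{S^{l-1}}$, hence satisfies the relations $d_{t+1}\mathbf s\mathbf b=*$ and $(d_0)^{t+1}\mathbf s\mathbf b=*$) together with the fact that $\tau$ commutes with degeneracies. The sign computation is the second delicate point: the shift by $(-1)^{l+1}$ relative to the naive $(-1)^{t+1}$ coming from the top face must be traced through the definition of $\sigma$ and the reindexing built into $\mathbf s$.
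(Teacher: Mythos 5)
Your overall architecture is the paper's: lift $a$ and $b$, form the simplex $\mathbf{a}\odot\tau\mathbf{s}\mathbf{b}$, read off its faces via Lemma~\ref{Lemma_tau_s}, and convert the face data into $d^r$ via Lemma~\ref{Lemma_bound_non_Moore}. But there is a genuine gap exactly at the point you call ``the main obstacle,'' and your proposed repair does not work. The missing idea is the normalization of the \emph{second} representative: the paper chooses $\mathbf{b}\in\mathfrak F_s(S^{l-1})_t$ with $d_i\mathbf{b}=1$ \emph{exactly} (not merely in higher filtration) for $1\le i\le t$ and $d_0\mathbf{b}\in\mathfrak F_{s+r}(S^{l-1})_{t-1}$; such a representative exists by the same correction-by-degeneracies process that proves Lemma~\ref{Lemma_bound_non_Moore}. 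Combined with the actual content of the proof of Lemma~\ref{Lemma_tau_s} --- namely that $\mathbf{a}\odot\tau\mathbf{s}(-)$ is a simplicial map into the path space $PA$, so the face formula $d_i(\mathbf{a}\odot\tau\mathbf{s}x)=\mathbf{a}\odot\tau\mathbf{s}d_ix$ holds for all $0\le i\le t$, not only for $i\le l-1$ as in the lemma's literal statement --- this kills every middle face on the nose: $d_iG=\mathbf{a}\odot\tau\mathbf{s}d_i\mathbf{b}=1$ for $1\le i\le t$. Only $d_0G$ and the top face survive, and Lemma~\ref{Lemma_bound_non_Moore} becomes applicable because every face of $G$ is then a genuine Moore cycle modulo the next filtration stage.

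Without that normalization your argument fails twice. First, for a generic representative $\mathbf{b}$ (whose faces merely lie in $\mathfrak F_{s+r}$), the middle faces $d_iG=\mathbf{a}\odot\tau\mathbf{s}d_i\mathbf{b}$ are neither degenerate nor negligible: they sit precisely in filtration $k+s+r$, which is exactly the filtration degree where $d^r$ takes its values, so they contribute to $\delta[G]$ on the $E^r$-page rather than dropping out; the loop-space identities you invoke ($d_{t+1}\mathbf{s}\mathbf{b}=*$ and $(d_0)^{t+1}\mathbf{s}\mathbf{b}=*$) constrain the top face and iterated zeroth faces of $\mathbf{s}\mathbf{b}$ and say nothing about these terms. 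Second, and prior to that, your assertion that ``all faces of $G$ are cycles modulo the relevant filtration, so Lemma~\ref{Lemma_bound_non_Moore} gives $\delta[G]=\sum(-1)^i[d_iG]$'' is false for generic $\mathbf{b}$: the hypothesis of that lemma requires each $d_iG$ to have exactly trivial faces modulo the normal subgroup, whereas here the faces of $d_iG$ lie only in $\mathfrak f_{k+s+r}$, so the alternating-sum formula is not even available. Two further corrections: the sign $(-1)^{l}$ does not come from ``the reindexing built into $\mathbf{s}$'' (suspension introduces no signs); it comes from Lemma~\ref{Lemma_bound_non_Moore} applied to $\mathbf{a}$ itself, whose only nontrivial face is $d_l$, so that $[d_l\mathbf{a}]=(-1)^l d^ra$, and this combines with the alternating-sum coefficient of the top face to give the stated $(-1)^{l+t}$. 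And in the well-definedness part, Lemma~\ref{Lemma_part_left_distr} is stated only for $\mathcal M=F[-]$, so it cannot carry the argument for a general filtered monad; the paper instead defines compositions of $Z^r$-lifts via the quotient monad $\bigoplus_j\mathfrak F_j/\mathfrak F_{j+r}$ (a device your sketch omits), handles boundaries in the second variable by Corollary~\ref{Corolary_Sph}, and boundaries in the first variable by naturality \eqref{eq_odot_x_nat}.
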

\begin{proof}
For the sake of simplicity we put $E(A)=E(A,\mathcal M)$ and
$E(S^*)=E(S^*,\mathcal M).$ Denote
$$Z^r_{k,l}(A)={\rm Im}\Big(\pi_l(\mathfrak f_kA/\mathfrak f_{k+r}A) \longrightarrow \pi_l\mathfrak l_kA\Big)={\rm Ker}\Big( \pi_l\mathfrak{l}_kA \overset{\delta}\longrightarrow \pi_{l-1}(\mathfrak{f}_{k+1}A/\mathfrak{f}_{k+r}A)\Big),$$
where $\delta$ is the boundary map associated with the short exact
sequence  $$1\to\mathfrak{f}_{k+1}A/\mathfrak{f}_{k+r}A \to
\mathfrak{f}_{k}A/\mathfrak{f}_{k+r}A \to  \mathfrak{l}_{k}A\to
1,$$ and
\begin{multline*}
B^r_{k,l}(A)={\rm Im}\Big(\pi_{l+1}(\mathfrak{f}_{k-r+1}A/\mathfrak{f}_{k}A)\overset{\delta}\longrightarrow \pi_l\mathfrak{l}_kA\Big)=\\
{\rm Ker}\Big(\pi_l\mathfrak{l}_kA\longrightarrow \pi_{l}(\mathfrak{f}_{k-r+1}A/\mathfrak{f}_{k+1}A)  \Big),
\end{multline*}
where $\delta$ is the boundary map associated with the short exact
sequence $$1\to \mathfrak{l}_kA\to
\mathfrak{f}_{k-r+1}A/\mathfrak{f}_{k+1}A \to
\mathfrak{f}_{k-r+1}A/\mathfrak{f}_{k}A\to 1.$$ Then
$E^r_{k,l}(A)=Z^r_{k,l}(A)/B^r_{k,l}(A).$

Prove that $\odot$ is well-defined on the level of $Z^r.$ Using
the property \eqref{eq_filt_prop}, we get that the map ${\sf
m}:\mathcal M^2\to \mathcal M$ induces the map
$$\mathfrak F_k/\mathfrak F_{k+r}\circ \mathfrak F_s/\mathfrak F_{s+r} \longrightarrow \mathfrak F_{k+s}/\mathfrak F_{k+s+r}.$$
Hence we get a well-defined structure of a monad on
$\bigoplus_{j\geq 0} \mathfrak{F}_{j}/\mathfrak{F}_{j+r}.$
Consider elements $a\in Z^r_{k,l}(A)$ and $b\in Z^r_{s,t}(S^m).$
If $l\ne m,$ the composition vanishes, so we assume that $l=m.$ By
definition of $Z^r,$  there exist elements  $\tilde a\in
\pi_m(\mathfrak f_kA/\mathfrak f_{k+r}A)$ and $\tilde b\in \pi_t
(\mathfrak{F}_sS^m/\mathfrak{F}_{s+r}S^m)$ that lift $a$ and
$b.$ Then their Kleisli composition $\tilde a\odot \tilde b\in
\pi_{t}(\mathfrak{f}_{k+s}A/\mathfrak{f}_{k+s+r}A)$ with respect
to the monad $\bigoplus_{j\geq 0}
\mathfrak{F}_{j}/\mathfrak{F}_{j+r}$ lifts the Kleisli composition
$a\odot b\in \pi_{t}\mathfrak l_{k+s}A,$ and hence $a\odot b\in
Z^r_{k+s,t}(A).$ Thus $\odot$ is well-defined on the level of
$Z^r.$

In order to prove that $\odot$ is well defined on the level of
$E^r$ we have to prove that for elements $x\in B^r_{k,m}(A)$ and
$y\in B^r_{s,t}(S^m)$ we have $$(a+x)\odot (b+y)-a\odot b \in
B^r_{k+s,t}(A).$$ Using the right distributivity
\eqref{eq_right_distributivity}, we get $$(a+u)\odot (b+v)-a\odot
b=(a+u)\odot b+(a+u)\odot v-a\odot b.$$ Hence, it is enough to
prove that $a\odot v\in B^r_{k+s,t}(A)$ for any $a\in
Z^r_{k,m}(A)$ and $v\in B^r_{s,t}(S^m),$ and $(a+u)\odot b-a\odot
b \in B^r_{k+s,t}(A)$ for any $a\in Z^r_{k,m}(A),$ $b\in
Z^r_{s,t}(S^m)$ and $u\in B^r_{k,l}(A).$

Prove that  $a\odot v\in B^r_{k+s,t}(A)$ for any $a\in
Z^r_{k,m}(A)$ and $v\in B^r_{s,t}(S^m).$ Consider a lifting
$\tilde a\in Z_m(\mathfrak{f}_kA/\mathfrak{f}_{k+r}A)$ of $a$ and
an element  $\tilde v'\in
N_t(\mathfrak{F}_{s-r+1}(S^m)/\mathfrak{F}_{s+1}(S^m))$ such that
$\tilde v=d_0\tilde v'$ lifts $v.$ Then by Corollary \ref{Corolary_Sph}
$d_i(\tilde a\odot \tilde v')=\tilde a\odot d_i\tilde v',$ where
$\odot$ is the Kleisli composition with respect to
$\bigoplus_{j\geq 0} \mathfrak{F}_{j}/\mathfrak{F}_{j+r}.$ Thus
$d_i(\tilde a\odot \tilde v')=1$ for $1\leq i\leq t+1$ and
$d_0(\tilde a\odot \tilde v')=\tilde a\odot \tilde v$ lifts the composition $a\odot v.$
Hence $a\odot v\in B^r_{k+s,t}(A).$

Prove that $(a+u)\odot b-a\odot b \in B^r_{k+s,t}(A)$ for any
$a\in Z^r_{k,m}(A),$ $b\in Z^r_{s,t}(S^m)$ and $u\in
B^r_{k,m}(A).$ Consider the embedding $\alpha:\mathfrak l_kA\to
\mathfrak{f}_{k-r+1}A/\mathfrak{f}_{k+1}A.$ Then
$B^r_{k,m}(A)={\rm Ker}(\pi_m\alpha)$ and $B^r_{k,t}(A)={\rm
Ker}(\pi_t\alpha).$ Using \eqref{eq_odot_x_nat}, we get
\begin{multline*} (\pi_t\alpha)((a+u)\odot b-a\odot b)=(\pi_m\alpha)(a+u)\odot
b-(\pi_m\alpha)a\odot b=\\ (\pi_m\alpha)a\odot
b-(\pi_m\alpha)a\odot b=0.\end{multline*} And hence $(a+u)\odot
b-a\odot b \in B^r_{k+s,t}(A).$ Therefore, $\odot$ is well-defined
on the level of $E^r.$

Prove the Leibniz rule \eqref{eq_odot_der}. Let $a\in E^r_{k,l}(A)$ and $b\in E^r_{s,t}(S^m).$  If $m\ne l-1,$
then both parts of the equation \eqref{eq_odot_der} are equal to
zero, hence we can assume that $m=l-1.$ Consider $\tilde a\in
\mathfrak f_kA_l$ that lifts $a$ such that $d_i\tilde a=1$ for
$0\leq i\leq l-1$ and $d_l\tilde a\in \mathfrak f_{k+r}A_{l-1};$
and consider $\tilde b\in \mathfrak F_{s}(S^{l-1})_t$ such that
$d_i\tilde b=1$ for $1\leq i\leq t$ and $d_0\tilde b\in \mathfrak
F_{s+r}(S^{l-1})_{t-1}.$ Since the filtration $\mathfrak F$ is
defined on the level of pointed sets, we get $\tau {\bf s}\tilde
b\in \mathfrak F_{s}(\Delta[l])_{t+1}.$ Consider the element
$\tilde a\odot \tau {\bf s}\tilde b \in \mathfrak f_{k+s}A_{t+1}$
that lifts the element $a\odot \sigma b\in E_{k+s,t+1}(A).$ Then
by Lemma \ref{Lemma_tau_s} we obtain that $d_i(\tilde a\odot \tau
{\bf s}\tilde b)=1$ for $1\leq i\leq t-1,$ $d_0(\tilde a\odot \tau
{\bf s}\tilde b)=\tilde a\odot \tau{\bf s} d_0 \tilde b$ and
$d_{t}(\tilde a\odot \tau {\bf s}\tilde b)=d_l\tilde a\odot \tau
\tilde b.$ The element $\tilde a\odot \tau {\bf s} d_0\tilde b$
lifts the element $a\odot d^r \sigma \beta,$ using Lemma
\ref{Lemma_bound_non_Moore} we get that the  element $d_l\tilde
a\odot \tau \tilde b$ lifts the element $ (-1)^l d^ra\odot  b.$
Finally, using Lemma \ref{Lemma_bound_non_Moore} for the element
$\tilde a\odot \tau {\bf s}\tilde b$ we obtain  $d^r(a\odot
\sigma b)=(-1)^{t+l} d^r a \odot b+a\odot d^r\sigma b.$
\end{proof}

\begin{example}\label{Example_E_F_p} Consider the $p$-accelerated $p$-lower central series on the Milnor's construction
$F[X]=\gamma_1^{[p]}[X]\supseteq \gamma^{[p]}_p[X] \supseteq
\gamma^{[p]}_{p^2}[X] \supseteq \dots$ This filtration gives the
accelerated $p$-lower  central  series  spectral  sequence
$E(X,\mathbb F_p)$ \cite{6_authors}, \cite{Bousfield-Kan} for a
simplicial set $X$. If $G$ is a simplicial group (=simplicial
$F[-]$-algebra), we have the spectral sequence $E(G,\mathbb
F_p)$ which is associated to the $p$-accelerated $p$-lower central
series $G=\gamma_{p^0}^{[p]}(G)\supseteq
\gamma_{p^1}^{[p]}(G)\supseteq \gamma_{p^2}^{[p]}(G)\supseteq
\dots.$ We refer to this spectral sequence as a  {\bf $p$-LCS spectral sequence}. Then Theorem \ref{Theorem_E(S^*,M)} states that the
Kleisli composition induces a well-defined  map $$\odot:
E^r(G,\mathbb F_p)\times E^r(S^*,\mathbb F_p)\longrightarrow
E^r(G,\mathbb F_p),$$ and the property \eqref{eq_odot_der} holds.
In particular, the map $$\odot: E^r(X,\mathbb F_p)\times
E^r(S^*,\mathbb F_p)\longrightarrow E^r(X,\mathbb F_p)$$ is well
defined and  satisfies the property  \eqref{eq_odot_der}.
\end{example}
\begin{example}\label{Example_E_p}
Consider the $p$-accelerated (integral) lower central series
$F[X]=\gamma_1[X]\supseteq \gamma_p[X] \supseteq \gamma_{p^2}[X]
\supseteq \dots.$ We denote by $\tilde E(X,p)$ the associated
spectral sequence. Since $\gamma_{p^l}[X]\subseteq
\gamma_{p^l}^{[p]}[X]$, we get a morphism of spectral sequences:
$$\tilde E(X,p)\longrightarrow E(X,\mathbb F_p).$$
\end{example}
\begin{example}
Let $\mathbb Z_{(p)}=\{a/b\in \mathbb Q\mid b \text{ is prime to }
p\}.$ Denote by $F_{\mathbb Z_{(p)}}[-]:{\sf Sets}_*\to {\sf
Sets}_*$ the $\mathbb Z_{(p)}$-completion of the group $F[X]$ (see
\cite{Bousfield-Kan_compl}). The structure of a monad on $F[-]$
induces a structure of a monad on $F_{\mathbb Z_{(p)}}[-].$ Since
$\mathbb Z_{(p)}$-completion is an exact functor, we have
$\pi_*F_{\mathbb Z_{(p)}}[X]$ is the $\mathbb Z_{(p)}$-completion
of $\pi_{*+1}\Sigma X.$ Consider the $p$-accelerated lower central
series filtration on this monad. One could prove that the
associated spectral sequence convergence to the $\mathbb
Z_{(p)}$-completion of $\pi_{*+1}\Sigma X.$
\end{example}

\subsection{The lambda algebra for an odd prime $p$}

In this subsection we remind the notion of the lambda algebra for
odd $p$ (see \cite{6_authors} with corrections in
\cite{Bousfield-Kan}). Let $p$ be a fixed odd prime number. The
mod-$p$ lambda algebra ${}_{[p]}\Lambda=\Lambda$ is an $\mathbb
F_p$-algebra generated by elements $\lambda_i$ of degree
$2(p-1)i-1$ for $i\geq 1$ and elements $\mu_j$ of degree $2(p-1)j$
for $j\geq 0.$ If we consider the elements ${\tt a}(k,j), {\tt
b}(k,j)\in \mathbb F_p$ given by  \begin{align*} & {\tt
a}(k,j)=(-1)^{j+1}\binom{(p-1)(k-j)-1}{j},\\ & {\tt
b}(k,j)=(-1)^j\binom{(p-1)(k-j)}{j},\end{align*} and the elements
$N(k),N'(k)\in \mathbb Z$ given by $N(k)=\left\lfloor
k-\frac{k+1}{p}\right\rfloor,$ $N'(k)=\left\lfloor
k-\frac{k}{p}\right\rfloor,$ the ideal of relations of $\Lambda$
is generated by the relations
\begin{equation}\label{eq_ll}
\lambda_i\lambda_{pi+k}=\sum_{j=0}^{N(k)} {\tt a}(k,j)
\lambda_{i+k-j}\lambda_{pi+j},
\end{equation}
\begin{equation}\label{eq_lm}
\lambda_i\mu_{pi+k}=\sum_{j=0}^{N(k)} {\tt a}(k,j)
\lambda_{i+k-j}\mu_{pi+j}+\sum_{j=0}^{N'(k)} {\tt b}(k,j)
\mu_{i+k-j}\lambda_{pi+j}
\end{equation}
for $i\geq 1,k\geq 0,$ and the relations
\begin{equation}\label{eq_ml}
\mu_i\lambda_{pi+k+1}=\sum_{j=0}^{N(k)} {\tt a}(k,j)
\mu_{i+k-j}\lambda_{pi+j+1},
\end{equation}
\begin{equation}\label{eq_mm}
\mu_i\mu_{pi+k+1}=\sum_{j=0}^{N(k)} {\tt a}(k,j)
\mu_{i+k-j}\mu_{pi+j+1}
\end{equation}
for $i\geq 0,k\geq 0.$ The differential $\partial:\Lambda\to
\Lambda$ is given by
\begin{align*}
\partial\lambda_k =\sum_{j=1}^{N(k)}{\tt
a}(k,j)\lambda_{k-j}\lambda_j,\\ & \partial
\mu_k=\sum_{j=0}^{N(k)} {\tt
a}(k,j)\lambda_{k-j}\mu_j+\sum_{j=1}^{N'(k)}{\tt b}(k,j)
\mu_{k-j}\lambda_{j}.\end{align*}

Further by $\nu_i$ we denote an element of $\{\lambda_i,\mu_i\}.$
A monomial $\nu_{i_1}\dots \nu_{i_l}$ is said to be {\bf
admissible} if $i_{k+1}\leq pi_k-1$  whenever
$\nu_{i_k}=\lambda_{i_k}$ and if $i_{k+1}\leq pi_k$ whenever
$\nu_{i_k}=\mu_{i_k}.$ The set of admissible monomials is a basis
of $\Lambda$. The {\bf unstable  lambda algebra} $\Lambda(n)$ is a
dg-subalgebra of $\Lambda$ generated by admissible elements
$\nu_{i_1}\dots \nu_{i_l}$ such that $i_1\leq n.$ We denote by
$\Lambda(n)_m$ the subspace generated by monomials of degree $m$
in $\Lambda(n)$ and by $\Lambda(n)_{m,l}$ the vector space
generated by monomials of length $l$ in $\Lambda(n)_m.$ Then
$$\Lambda(n)=\bigoplus_{m,l}\Lambda(n)_{m,l}, \hspace{1cm} \Lambda(n)_m=\bigoplus_{l} \Lambda(n)_{m,l}.$$

Consider the left ideal of $\Lambda:$
\begin{equation}
\Lambda\lambda=\sum_i \Lambda \lambda_i.
\end{equation}
The set of all admissible monomials $\nu_{i_1}\dots \nu_{i_l}$
such that $\nu_{i_l}=\lambda_{i_l}$ forms a basis of
$\Lambda\lambda.$ Further we put
$$\Lambda\lambda(n)=\Lambda\lambda\cap \Lambda(n).$$ The set of
all admissible monomials $\nu_{i_1}\dots \nu_{i_l}$ such that
$\nu_{i_l}=\lambda_{i_l}$ and $\nu_{i_1}$ is one of the first $n$
letters in the sequence $\mu_0,\lambda_1,\mu_1,
\lambda_2,\mu_2,\dots$ forms a basis of $\Lambda\lambda(n).$ This
ideal is homogeneous and  inherits the bigradding:
$$\Lambda\lambda(n)=\bigoplus_{m,l} \Lambda\lambda(n)_{m,l}, \hspace{1cm} \Lambda\lambda(n)_m=\bigoplus_{l} \Lambda\lambda(n)_{m,l}.$$

\subsection{(Non)restricted Lie powers and the lambda algebra}

Denote by $\mathcal L[-]:{\sf Sets}_* \longrightarrow {\sf
Sets}_*$
 the functor that sends a pointed set $X$ to the unrestricted Lie ring (over $\mathbb Z$) with generators  $X$ and with one relation $*=0.$ For a prime $p$ similarly we define the functor $\mathcal L^{[p]}[-]:{\sf Sets}_*\to {\sf Sets}_*$ that sends $X$ to the restricted Lie algebra over $\mathbb F_p.$ They have natural structures of monad, and these monads can be extended to the category of simplicial sets:
$$\mathcal L[-], \mathcal L^{[p]}[-]:{\mathcal S}_* \longrightarrow {\mathcal S}_*.$$
These functors have standard gradings
$$\mathcal L[X]=\bigoplus_{i\geq 1} \mathcal L_i[X], \hspace{1cm} \mathcal L^{[p]}[X]=\bigoplus_{i\geq 1} \mathcal L^{[p]}_i[X].$$
The Kleisli composition induces operations on $\pi_*\mathcal
L[S^*]=\bigoplus_{n\geq 2} \pi_*\mathcal L[S^n]$ and
$\pi_*\mathcal L^{[p]}[S^*]=\bigoplus_{n\geq 2} \pi_*\mathcal
L^{[p]}[S^n].$

 The homotopy groups of $\mathcal L(S^n)$ can be described in terms of derived functors in sense of Dold-Puppe:
$$\pi_m\mathcal L_i[S^{n}]=L_m\mathscr L_i(\mathbb Z,n),\hspace{1cm} \pi_m\mathcal L_i^{[p]}[S^{n}]=L_m\mathscr L_i^{[p]}(\mathbb Z,n),$$
where $\mathscr L_i:{\sf Ab} \to {\sf Ab}$ is the functor of $i$th
Lie power (over $\mathbb Z$) and $\mathscr L_i^{[p]}:{\sf Ab} \to
{\sf Vect}(\mathbb F_p)$. It was proved in \cite{6_authors}, \cite{Bousfield-Kan} that,
if $i\ne p^l$ for some $l\geq 0$, then
$\pi_{m}(\mathcal L^{[p]}_i(S^{n}))=0,$ and there is an
isomorphism
\begin{equation}\label{eq_iso_L_p_Lambda}
\pi_{m}\mathcal L^{[p]}_{p^l}[S^{2n}]\cong {}_{[p]}\Lambda(n)_{m-2n,l}
\end{equation}
for any $l\geq 0.$ Moreover, the multiplication in the lambda algebra corresponds to
the Kleisli composition:
\begin{equation}\label{eq_comp_mult}
\xyma{\pi_{2m}\mathcal L_{p^l}^{[p]}[S^{2k}] \times \pi_{n}\mathcal L_{p^t}^{[p]}[S^{2m}]\ar@{->}[r]^>>>>>{\odot}\ar@{->}[d]^\cong &  \pi_n\mathcal L_{p^{l+t}}^{[p]}[S^{2k}]\ar@{->}[d]^\cong \\
\Lambda\lambda(k)_{m-k,l} \times
\Lambda\lambda(m)_{n-2m,t}  \ar@{->}[r]^>>>>>{\times} &
\Lambda\lambda(k)_{n-k,t+l}}.
\end{equation}

Daniella Leibowitz in \cite[Prop. 2.5]{Leibowitz} proves that if $i\ne p^l$ for some $l\geq 0$ and a
prime $p$, then $\pi_{m}(\mathcal L_i(S^{n}))=0,$ and there is an
isomorphism
\begin{equation}
\pi_{m}\mathcal L_{p^l}[S^{2n}]\cong {}_{[p]}\Lambda\lambda(
n)_{m-2n,l}
\end{equation}
for any odd prime $p$ and $l\geq 1.$ The simillar   Moreover, the embedding
$\mathcal L_{p^l}[X]\subseteq \mathcal L^{[p]}_{p^l}[X]$ induces
the commutative diagram
 \begin{equation}\label{pic_emb}
\xyma{\pi_{m}\mathcal L_{p^l}[S^{n}] \ar@{->}[r] \ar@{->}[d]^{\cong} & \pi_{m}\mathcal L^{[p]}_{p^l}[S^{n}]\ar@{->}[d]^\cong \\
 {}_{[p]}\Lambda\lambda(\lfloor n/2 \rfloor)_{m-n,l}\ar@{->}[r]^\subseteq & {}_{[p]}\Lambda(\lfloor n/2 \rfloor)_{m-n,l} }
\end{equation}

\subsection{Unstable multiplicative $p$-LCS spectral sequence}

Let $p$ be a fixed odd prime. Consider the $p$-LCS spectral sequence
$E(G,\mathbb F_p)$ (Example \ref{Example_E_F_p}) and denote
$E(X,\mathbb F_p)=E(F[X],\mathbb F_p),$ $E(S^*,\mathbb
F_p)=\bigoplus_{n\geq 2} E(S^n,\mathbb F_p).$ It is proved in
\cite{Rector} that
$$E^1(X,\mathbb F_p)=\pi_*\mathcal L_*^{[p]}[X]$$ and, if $X$ is connected and the homotopy groups $\pi_i(\Sigma X)$ are finitely generated, then
$$E(X,\mathbb F_p)\Rightarrow \pi_*(\Sigma X,p),$$
where $\pi_*(\Sigma X,p)$ is the quotient of $\pi_*(\Sigma X)$ by
the subgroup of elements of finite order prime to $p.$
In particular, if we take $X=S^{2n}$ and apply \eqref{eq_iso_L_p_Lambda}, we get
$$E^1(S^{2n},\mathbb F_p)={}_{[p]}\Lambda(n) \Rightarrow \pi_*(S^{2n+1},p).$$

\begin{theorem}\label{simplicial_theorem}
Let $G$ be a simplicial group.  Then the Kleisli composition
$\odot$ induces a well-defined  map $$\odot: E^r(G,\mathbb
F_p)\times E^r(S^*,\mathbb F_p)\longrightarrow E^r(G,\mathbb
F_p),$$ and for elements $a\in E^r_{k,l}(G,\mathbb F_p)$ and $b\in
E^r_{s,t}(S^*,\mathbb F_p)$ the following Leibniz-like rule
holds
$$
d^r(a\odot \sigma b)=(-1)^{l+t} d^r a \odot b+a\odot d^r \sigma b.
$$
\end{theorem}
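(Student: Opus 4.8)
The plan is to obtain this statement as a direct specialisation of the general Theorem \ref{Theorem_E(S^*,M)}. The $p$-LCS spectral sequence is, by Example \ref{Example_E_F_p}, nothing but the spectral sequence $E(A,\mathcal M)$ attached to the monad $\mathcal M = F[-]$ of Milnor's construction (Example \ref{Example_F}), equipped with the identity morphism $F[-]\to F[-]$ and filtered by the $p$-accelerated $p$-lower central series $\mathfrak F_iX=\gamma^{[p]}_{p^i}[X]$; the algebra $A$ is taken to be the simplicial group $G$, which is precisely a simplicial $F[-]$-algebra. Thus $E(G,\mathbb F_p)=E(G,F[-])$ and $E(S^*,\mathbb F_p)=E(S^*,F[-])$, and it suffices to check that this filtered monad satisfies the hypotheses of Theorem \ref{Theorem_E(S^*,M)}.

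The only hypothesis that is not purely formal is the multiplicativity property \eqref{eq_filt_prop}, that the monad multiplication restricts to ${\sf m}\colon \mathfrak F_i\circ\mathfrak F_j\to\mathfrak F_{i+j}$. Verifying this is the real content of the argument, and the step I expect to require the most care. I would argue from the two closure properties of the $p$-lower central series of any group, $[\gamma^{[p]}_a,\gamma^{[p]}_b]\subseteq\gamma^{[p]}_{a+b}$ and $(\gamma^{[p]}_a)^p\subseteq\gamma^{[p]}_{pa}$. An element of $\mathfrak F_i\circ\mathfrak F_j$ evaluated at $X$ lies in $\gamma^{[p]}_{p^i}[\mathfrak F_jX]$, i.e.\ it is a product of iterated commutators and $p$-th powers of $\gamma$-weight at least $p^i$ whose generators are elements of $\mathfrak F_jX=\gamma^{[p]}_{p^j}[X]$. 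Since ${\sf m}$ is a group homomorphism sending each such generator to the corresponding element of $F[X]$, it carries this expression to the same commutator-and-power expression evaluated on elements of $\gamma^{[p]}_{p^j}[X]$; the two closure properties then multiply weights, so the result lies in $\gamma^{[p]}_{p^i\cdot p^j}[X]=\gamma^{[p]}_{p^{i+j}}[X]=\mathfrak F_{i+j}X$. This is exactly \eqref{eq_filt_prop}.

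With \eqref{eq_filt_prop} established, Theorem \ref{Theorem_E(S^*,M)} applies verbatim with $\mathcal M=F[-]$ and $A=G$, yielding both the well-defined pairing $\odot\colon E^r(G,\mathbb F_p)\times E^r(S^*,\mathbb F_p)\to E^r(G,\mathbb F_p)$ and the Leibniz-like rule $d^r(a\odot\sigma b)=(-1)^{l+t}d^ra\odot b+a\odot d^r\sigma b$. To match the filtration named in Example \ref{Example_E_F_p}, one checks finally that $\mathfrak f_iG={\sf m}^G(\mathfrak F_iG)=\gamma^{[p]}_{p^i}(G)$: the structure map ${\sf m}^G\colon F[G]\to G$ is a surjective homomorphism of simplicial groups, and a surjective homomorphism carries the $p$-lower central series onto that of its image. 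Hence the general machinery specialises exactly to the asserted statement, and everything beyond the weight bookkeeping of the preceding paragraph is inherited from Theorem \ref{Theorem_E(S^*,M)}.
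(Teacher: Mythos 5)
Your proposal is correct and follows exactly the paper's route: the paper's entire proof is the one-line citation ``It follows from Theorem \ref{Theorem_E(S^*,M)}'', with the specialisation to $\mathcal M=F[-]$ and the $p$-accelerated $p$-lower central series set up in Example \ref{Example_E_F_p}. Your additional verifications --- that ${\sf m}\colon\mathfrak F_i\circ\mathfrak F_j\to\mathfrak F_{i+j}$ holds because the closure properties $[\gamma^{[p]}_a,\gamma^{[p]}_b]\subseteq\gamma^{[p]}_{a+b}$ and $(\gamma^{[p]}_a)^p\subseteq\gamma^{[p]}_{pa}$ multiply weights ($p^i\cdot p^j=p^{i+j}$, the reason for the accelerated indexing), and that $\mathfrak f_iG=\gamma^{[p]}_{p^i}(G)$ via surjectivity of ${\sf m}^G$ --- are exactly the details the paper leaves implicit, and they are sound.
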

\begin{proof}
It follows from Theorem \ref{Theorem_E(S^*,M)}
\end{proof}

\subsection{Unstable multiplicative spectral sequence $\tilde E(S^*, p)$}

Let $p$ be a fixed odd prime. Consider the spectral sequence
$\tilde E(X,p)$ from Example \ref{Example_E_p}. For a space $X$
and the Milnor construction $F[X]$, consider $p$-accelerated
(integral) lower central series
$$F[X]=\gamma_1[X]\supseteq
\gamma_p[X] \supseteq \gamma_{p^2}[X] \supseteq \dots.$$ Recall
that \cite{Leibowitz}, for all $m\geq 1$,
\begin{equation}\label{zero}_p\pi_{m}(\mathcal L_i(S^{n}))=0,\
\text{for}\ i\neq p^j \ \text{for some}\ j.\end{equation}
Therefore, for a sphere $S^n$, one has a natural isomorphism
$$
_p\pi_m(\mathcal
L_{p^k}(S^n))=_p\pi_m(\gamma_{p^k}F[S^n]/\gamma_{p^{k+1}}(F[S^n])),\
k\geq 1.
$$
The following follows immediately from the description of the
$E^1$-page of the lower central series spectral sequences (
\cite{6_authors} and \cite{Leibowitz})
\begin{multline*}
\pi_m(F[S^n]/\gamma_p(F[S^n])=\\ \begin{cases} \mathbb Z,\
m=n,\\
\mathbb Z\oplus \text{torsion groups of orders prime to}\ p,\
\text{if}\ n\ \text{is odd and}\ m=2n\\
0\ \text{or torsion groups of orders prime to}\ p,\
\text{otherwise}
\end{cases}
\end{multline*}
Observe that, the integral term in this description which
corresponds to the Hopf element in $\pi_{4n-1}(S^{2n})$ maps
trivially to the $\mathbb Z/p$-torsion elements in the spectral
sequence $\tilde E(S^*, p)$. This fact, of course, is well-known
in homotopy theory and can be proved in different ways. For
example, it follows from the functorial view on the spectral
sequence $\tilde E(S^*,p)$. One can extend it to the case of
wedges of spheres and consider the differentials in the spectral
sequence as natural transformations on polynomial functors from
the category of free abelian groups to all abelian groups. The
$\mathbb Z$-term in Hopf dimensions comes from the Whitehead
universal quadratic functor $\Gamma^2$, all the $\mathbb
Z/p$-terms in dimensions bellow the Hopf one come from additive
functors $-\otimes \mathbb Z/p$ and the needed statement follows
from the elementary fact that, for any odd prime $p$, there are no
non-zero natural transformations of the type $\Gamma^2\to -\otimes
\mathbb Z/p$. Hence,
$$
_p\tilde E(S^n, p)\Longrightarrow \pi_*(S^{n+1},p)
$$

Observe that, the property (\ref{zero}) can not be extended to
more complicated spaces, for example,
$$
_3\pi_5\mathcal L_{6}(S^{1}\vee S^{1})=\mathbb Z/3.
$$
Analogously with previous cases, we have
\begin{theorem}\label{simplicial_theorem1}
The Kleisli composition $\odot$ induces a well-defined map
$$\odot: \tilde E^r(S^*,p)\times \tilde E^r(S^*,p)\longrightarrow \tilde E^r(S^*,p),$$ and for elements $a\in
\tilde E^r_{k,l}(S^*,p)$ and $b\in \tilde E^r_{s,t}(S^*,p)$ the
following Leibniz-like rule holds
$$
d^r(a\odot \sigma b)=(-1)^{l+t} d^r a \odot b+a\odot d^r \sigma b.
$$
\end{theorem}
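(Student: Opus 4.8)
The plan is to derive Theorem~\ref{simplicial_theorem1} from the general Theorem~\ref{Theorem_E(S^*,M)}, in exactly the same manner as Theorem~\ref{simplicial_theorem}, the only new ingredient being the verification that the $p$-accelerated \emph{integral} lower central series is an admissible filtration of the monad $F[-]$. First I would take $\mathcal M=F[-]$ itself, regarded as a monad over $F[-]$ via the identity morphism of monads, and equip it with the filtration $\mathfrak F_i[X]=\gamma_{p^i}[X]$ from Example~\ref{Example_E_p}. By the very definition of $\tilde E$ we then have $\tilde E(X,p)=E(F[X],\mathcal M)$, so that $\tilde E(S^*,p)=E(S^*,\mathcal M)$.

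The substantive point is to check the hypotheses of Theorem~\ref{Theorem_E(S^*,M)} for this filtration. Each $\gamma_{p^i}[X]$ is a fully invariant, in particular normal, subgroup of $F[X]$ and is preserved by the functorial maps, so the only condition requiring proof is the multiplicativity~\eqref{eq_filt_prop}, namely that the monad multiplication ${\sf m}\colon F[F[-]]\to F[-]$ restricts to $\gamma_{p^i}\circ\gamma_{p^j}\to\gamma_{p^{i+j}}$. To see this I would observe that an element of $(\gamma_{p^i}\circ\gamma_{p^j})[X]=\gamma_{p^i}\bigl(F[\gamma_{p^j}[X]]\bigr)$ is a product of iterated commutators of weight at least $p^i$ whose entries are elements of $\gamma_{p^j}[X]$; under ${\sf m}$ these entries map to elements of $\gamma_{p^j}(F[X])$, and the classical commutator calculus identity $[\gamma_a,\gamma_b]\subseteq\gamma_{a+b}$ shows, by induction on weight, that a commutator of weight $c$ in elements of $\gamma_b$ lies in $\gamma_{cb}$. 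Taking $c\ge p^i$ and $b=p^j$ gives $cb\ge p^{i+j}$, so the image lies in $\gamma_{p^{i+j}}(F[X])$, as required.

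With~\eqref{eq_filt_prop} established, I would apply Theorem~\ref{Theorem_E(S^*,M)} with $A=F[S^n]$ for each $n\ge 2$ (an $\mathcal M$-algebra since $\mathcal M=F[-]$) and sum over $n$. Because $E(S^*,\mathcal M)=\bigoplus_{n\ge 2}E(F[S^n],\mathcal M)$, both the source and the target factor of the induced pairing coincide with $\tilde E^r(S^*,p)$, and the Leibniz rule~\eqref{eq_odot_der} specializes verbatim to the claimed formula $d^r(a\odot\sigma b)=(-1)^{l+t}d^r a\odot b+a\odot d^r\sigma b$.

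The main, and essentially only, obstacle is the multiplicativity~\eqref{eq_filt_prop} for the accelerated integral series; everything else is a direct specialization of the general machinery. This step is in fact a shade simpler than in the mod-$p$ (restricted) case of Theorem~\ref{simplicial_theorem}, since one need not track $p$-th power operations and the weight bookkeeping for ordinary commutators suffices.
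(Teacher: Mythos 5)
Your proposal is correct and takes essentially the same route as the paper: both deduce the statement from Theorem \ref{Theorem_E(S^*,M)} applied to the monad $F[-]$ (over itself) equipped with the $p$-accelerated integral lower central series filtration of Example \ref{Example_E_p}, with $A=F[S^n]$ summed over $n\geq 2$. Your explicit check of the multiplicativity condition \eqref{eq_filt_prop}, via the classical fact that $\gamma_c(\gamma_b(G))\subseteq\gamma_{cb}(G)$, fills in a detail the paper leaves implicit and is accurate.
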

The statement of the theorem follows as before from theorem
\ref{Theorem_E(S^*,M)} and the description of the Kleisli
composition in terms of the lambda algebra.

\end{document}